\theoremstyle{theorem}
\newtheorem{theorem}{Theorem}[section]
\newtheorem{proposition}[theorem]{Proposition}
\newtheorem{lemma}[theorem]{Lemma}
\newtheorem*{lemma*}{Lemma}
\theoremstyle{definition}
\newtheorem{definition}[theorem]{Definition}
\newtheorem{example}[theorem]{Example}
\theoremstyle{remark}
\newtheorem{remark}[theorem]{Remark}
\numberwithin{equation}{section}
\newcommand{\B}{\mathcal{B}}
\newcommand{\Z}{\mathbb{Z}}
\newcommand{\R}{\mathbb{R}}
\newcommand{\N}{\mathbb{N}}
\newcommand\longmapsfrom{\mathrel{\reflectbox{\ensuremath{\longmapsto}}}}
\DeclareMathOperator{\spn}{span}
\DeclareMathOperator{\ob}{ob}
\DeclareMathOperator{\diag}{diag}
\begin{document}
	
	\title[A categorical approach to operator semigroups]{A categorical approach to operator semigroups}
	
	\author{Abraham C.S.\ Ng}
	\thanks{Address: School of Mathematics and Applied Statistics, University of Wollongong, Australia}
	\thanks{Email: abraham\_ng@uow.edu.au, ORCID iD: 0000-0002-1701-7904}

	\subjclass[2010]{18A05, 18A40, 47D03, 47D06.}
	\date{\today}

	
	\keywords{Category theory, operator theory, $C_0$-semigroups, extensions}
	
	\begin{abstract}
		The aim of this paper is to exploit the structure of strongly continuous operator semigroups in order to formulate a categorical framework in which a fresh perspective can be applied to past operator theoretic results. In particular, we investigate the inverse-producing Arens extension for Banach algebras (Trans.\ Amer.\ Math.\ Soc.\  88:536-548, 1958) adapted for operators and operator semigroups by Batty and Geyer (J.\ Operator Theory 78(2):473-500, 2017) in this new framework, asking and answering questions using categorical language. We demonstrate that the Arens extension defines an extension functor in this setting and that it forms an adjunction with the suitably defined forgetful functor. As a by-product of this categorical framework, we also revisit the work on Banach direct sums by Lachowicz and Moszy\'nski (Semigroup Forum 93(1):34-70, 2016). This paper can be considered as a brief exploration of the triple interface between operator semigroups, Banach algebras, and category theory.
	\end{abstract}
	
	\maketitle

\subsection*{Acknowledgements}

The author is grateful to many people for their help during the production of this article. To Charles Batty and David Seifert for insightful discussions on the subject of this article, to Joshua Ciappara for making some crucial suggestions from an algebraist's perspective (such as suggesting that I look at adjunctions), and to the reviewer for numerous useful comments as a result of which this paper is much improved. The author is also grateful to the University of Sydney for partially funding the majority of this work through the Barker Graduate Scholarship during his time as a doctoral student at the University of Oxford.

\subsection*{Declarations}

\textit{Funding:} This work was partially supported by the University of Sydney through the Barker Graduate Scholarship. The revisions were completed when the author was funded by ARC grant DP180100595. \textit{Conflicts of interest/Competing interests:} Not applicable. \textit{Availability of data and material:} Not applicable. \textit{Code availability:} Not applicable.

\section{Introduction}\label{sec:1}

The framework of category theory provides a powerful tool with which to view certain mathematical objects that have intrinsic structure in a holistic way. There are many ways in which category theory appears in functional analysis with various treatments appearing in the literature. A survey detailing the appearance of category theory within Banach space theory can be found in \cite{Ca} while $C^*$-categories are treated in various places, including for the first time in \cite{GLR}. Interpolation theory and representations of groups, natural candidates for a categorical approach, are treated within categorical settings by \cite{BL} and \cite{DK} respectively. However, there are still plenty of connections left to be made and many further topics within functional analysis where categorical language may provide both motivation to ask new questions and methods to find appropriate answers.

The aim of this paper is to bring together category theory and operator semigroups with the special aim to revisit previous operator theoretic results with fresh eyes. In the theory of \textit{strongly continuous semigroups}, also known as \textit{$C_0$-semigroups}, abstract functional analytic and operator theoretic machinery can be brought to bear upon partial differential equations to tackle questions of well-posedness and asymptotic behaviour (see for example \cite{ABHN,BaDu,BPS,BoTo,EN,NS,RSS}). In all that follows, we denote by $\B(X)$ the space of bounded linear operators on a Banach space $X$, $\N$ the natural numbers, $\Z_+ = \N\cup\{0\}$ the non-negative integers, and $\R_+=[0,\infty)$ the non-negative reals.

\begin{definition}
	A \textit{$C_0$-semigroup} is a map $T:\R_+ \to \B(X)$ which is continuous in the strong operator topology and satisfies the following functional equation:
	\begin{align*}
	T(0) & = I, \\
	T(s+t) & = T(s)T(t), \quad s,t \in \R_+.
	\end{align*}
\end{definition}

If $\R_+$ is replaced by $\R$ in the above definition, then $T$ is a $C_0$-group. Though $C_0$-semigroups can be thought of in various ways, including as solutions to so-called Cauchy problems or as particular inverse Laplace transforms (see \cite[Ch.~3]{ABHN}), we will think of them as representations of the semigroup $\R_+$ in $\B(X)$. This is natural since $T$ is a semigroup homomorphism from the semigroup $\R_+$ under addition to the semigroup $\B(X)$ under composition. Likewise, we will think of $C_0$-groups as representations of the group $\R$ in $\B(X)$. This is an algebraic interpretation that lends itself to categorical formulation as we shall see in Section \ref{sec:3}. We will sometimes refer to $C_0$-semigroups just as semigroups, when it is clear from context what we mean.

Central to our overall aim is the desire to better understand the extension of $C_0$-semigroups to $C_0$-groups. Douglas \cite{D} and others \cite{Breh,cooper,Ito} addressed this in the context of isometric semigroups. Batty and Yeates \cite{BY} generalised Douglas' construction to include more general semigroups, including in particular, $C_0$-semigroups $T$ that are \textit{expansive}, that is, satisfy
$$\|T(t)x\|\ge \|x\|, \quad x\in X,t\ge0.$$

Simple examples of semigroups that are expansive include those that are isometric and those that are generated by multiplication operators $M_a$ on function spaces where the multiplier function $a$ has bounded non-negative real part. A slightly more complicated (though not by much) example is the semigroup $T_q$ on $C_0(\R)$, the space of complex continuous functions on $\R$ vanishing at infinity, given by
$$\left[T_q(t)f\right](s) := e^{\int_{s-t}^s q(r)\,dr}f(s-t), \quad f \in C_0(\R), t\ge0, s \in \R,$$ where $q:\R \to \R_+$ is continuous and vanishes at infinity. $T_q$ can be easily checked to map $C_0(\R)$ into $C_0(\R)$ and to satisfy strong continuity, the semigroup property, and expansiveness (see \cite[Ch.~VI \S1a.]{EN} for a related semigroup).

The approach utilised by Batty and Yeates has various advantages, including the scope of its generality. Nonetheless, the construction we will be primarily working with in this paper comes from Arens \cite{A}, proved in the setting of Banach algebras (see also \cite{Arens2,Arens3,AH,boll,boll2} for related results).
\begin{theorem}[{\cite[Theorem 3.1]{A}}]
	Let $c$ be an element of a commutative normed algebra $A$. Then $c$ has an inverse of norm not exceeding $\gamma^{-1}$ in some algebra $B$ containing $A$ if and only if
	\begin{equation*} \inf_{a \in A} \frac{\|ca\|}{\|a\|} =: \gamma >0.\end{equation*}
\end{theorem}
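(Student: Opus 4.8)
The plan is to prove the two implications separately. Throughout I work in the setting where $A$ is unital with identity of norm one (the non-unital case needing only minor bookkeeping, since ``inverse'' presupposes a unit anyway). \emph{Necessity} is immediate: suppose $c$ has an inverse in a normed algebra $B$ containing $A$ isometrically, and set $\delta^{-1}:=\|c^{-1}\|_B<\infty$. For every $a\in A$, writing $a=c^{-1}(ca)$ and using submultiplicativity of the norm of $B$ together with $\|a\|_B=\|a\|_A$ and $\|ca\|_B=\|ca\|_A$ gives $\|a\|_A\le\delta^{-1}\|ca\|_A$, so $\|ca\|_A\ge\delta\|a\|_A$ for all $a$ and hence $\gamma\ge\delta>0$. (This also shows $\|c^{-1}\|_B\ge\gamma^{-1}$ in any such extension, so the bound $\gamma^{-1}$, when attained, is attained exactly.)

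\emph{Sufficiency.} Assume $\gamma>0$; note this forces $c$, hence every power $c^m$, to be a non-zero-divisor in $A$. I would take $B$ to be the completion of the algebraic localisation of $A$ at $c$, carrying a weighted $\ell^1$-norm. Concretely, put $B_0:=A[t]/(ct-1)$, so every element of $B_0$ is represented by some polynomial $\sum_{k=0}^N a_k t^k$ with $a_k\in A$, and $t$ is an inverse of $c$; a coefficient comparison, using that $c$ is a non-zero-divisor, shows $A$ embeds into $B_0$ as a subalgebra. Define
\[
\|\beta\|_B := \inf\Big\{\,\sum_{k=0}^N\|a_k\|_A\,\gamma^{-k}\; :\; \beta=\sum_{k=0}^N a_k t^k \text{ in } B_0\,\Big\}.
\]
It is routine that $\|\cdot\|_B$ is a submultiplicative seminorm, that $\|a\|_B\le\|a\|_A$ for $a\in A$ (use the representative $a\cdot t^0$), and that $\|c^{-1}\|_B=\|t\|_B\le\gamma^{-1}$ (use the representative $1\cdot t$).

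\emph{The crux} is the reverse inequality $\|a\|_B\ge\|a\|_A$ for $a\in A$: this is what stops $\|\cdot\|_B$ from killing a nonzero element of $A$, makes the embedding isometric, and is precisely where $\gamma>0$ is used quantitatively. I would establish it by a Hahn--Banach lifting through the tower $A\xrightarrow{L_c}A\xrightarrow{L_c}\cdots$, where $L_c$ denotes multiplication by $c$. Fix $a\in A$ and pick $\psi_0\in A^*$ with $\|\psi_0\|=1$ and $\psi_0(a)=\|a\|_A$. Since $\|cb\|_A\ge\gamma\|b\|_A$, the operator $L_c$ is bounded below by $\gamma$, so for any $\eta\in A^*$ the rule $cb\mapsto\eta(b)$ defines a functional of norm $\le\gamma^{-1}\|\eta\|$ on the subspace $cA$, which Hahn--Banach extends to all of $A$. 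Iterating yields $\psi_1,\psi_2,\dots\in A^*$ with $\psi_k(cb)=\psi_{k-1}(b)$ for all $b$ and $\|\psi_k\|\le\gamma^{-k}$, whence $\psi_N(c^{N-k}x)=\psi_k(x)$ whenever $0\le k\le N$. Now set $F\big(\sum_k a_k t^k\big):=\sum_k\psi_k(a_k)$. If two polynomials represent the same $\beta\in B_0$ (padded to a common degree $N$), multiplying the identity in $B_0$ by $c^N$ and using $c^Nt^k=c^{N-k}$ shows they have the same image $\sum_k a_k c^{N-k}$ in $A$, so $\sum_k\psi_k(a_k-a_k')=\psi_N\big(\sum_k c^{N-k}(a_k-a_k')\big)=\psi_N(0)=0$; thus $F$ is a well-defined linear functional on $B_0$, and $|F(\beta)|\le\|\beta\|_B$ directly from the definition of $\|\cdot\|_B$. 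Since $F(a)=\psi_0(a)=\|a\|_A$, this gives $\|a\|_B\ge\|a\|_A$, so $A\hookrightarrow B_0$ is isometric.

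\emph{Conclusion.} Definiteness of $\|\cdot\|_B$ on all of $B_0$ then follows: if $\|\beta\|_B=0$, choose $N$ with $c^N\beta\in A$; submultiplicativity gives $\|c^N\beta\|_B\le\|c\|_A^N\|\beta\|_B=0$, and the isometry on $A$ forces $c^N\beta=0$, hence $\beta=t^N(c^N\beta)=0$. Letting $B$ be the completion of $(B_0,\|\cdot\|_B)$ finishes the argument: $B$ is a commutative Banach algebra containing $A$ isometrically in which $c$ has inverse $t$ of norm at most $\gamma^{-1}$. The main obstacle is the crux step: the algebraic part --- the localisation, submultiplicativity, the easy inequality --- is purely formal, whereas showing that the weighted $\ell^1$-norm detects every nonzero element of $A$ requires producing, for each such element, a norm-controlled functional on the entire extension, and this is exactly where positivity of $\gamma$ enters, namely through the norm-$\gamma^{-1}$ right inverses of $L_c^*$ along the tower of powers of $c$.
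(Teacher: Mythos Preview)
Your argument is correct. Note, however, that the paper does not itself prove this theorem: it is quoted as a citation of Arens' result. The closest thing to a proof in the paper is the construction in Theorem~\ref{3t1}, which carries out the analogous extension for expansive $C_0$-semigroups, and it is worth comparing your approach with that one. Your $B_0=A[t]/(ct-1)$ equipped with the weighted $\ell^1$-norm is precisely the Banach-algebra avatar of the paper's $\ell_1(X)/J$: identifying $\sum a_kt^k$ with the sequence $(a_k)$, the ideal $(ct-1)A[t]$ becomes the range of $I-cR$ on finitely supported sequences, and your infimum norm is the quotient norm. Where the two diverge is in the crux step, the isometry of $A\hookrightarrow B_0$. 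You build, via iterated Hahn--Banach extensions along the tower of multiplications by $c$, a functional $F$ on $B_0$ of norm at most $1$ with $F(a)=\|a\|_A$. The paper instead argues by direct telescoping: from $\|b\|\le\gamma^{-1}\|cb\|$ one gets, for any representation $a=\sum_k a_kt^k$ arising from $a-\sum a_kt^k=(ct-1)\sum b_jt^j$, the chain $\|a\|\le\|a_0\|+\|b_0\|\le\|a_0\|+\gamma^{-1}\|cb_0\|\le\|a_0\|+\gamma^{-1}\|a_1\|+\gamma^{-1}\|b_1\|\le\cdots$, terminating because the $b_j$ vanish eventually; this is exactly the iteration displayed around \eqref{3a}--\eqref{3b}. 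That route is more elementary (no Hahn--Banach, no choice) and perhaps closer to Arens' original. Your dual approach, on the other hand, makes transparent \emph{why} the estimate holds---it exhibits the norm-controlled right inverse of $L_c^*$---and would adapt more readily to situations where one needs to extend functionals, not just compare norms.
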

This theorem can be formulated for bounded linear operators and its proof can also be adapted for generators of $C_0$-semigroups as has been done by Batty and Geyer \cite{BG}. What is more, Arens' method of extension can be applied to $C_0$-semigroups themselves as we shall see in Theorem \ref{3t1}.

Our main results, Theorems \ref{main} and Theorem \ref{adjunction}, state that Arens' extension for $C_0$-semigroups is functorial within an appropriate categorical framework and, furthermore, that it forms an adjunction with the suitably defined forgetful functor. This shows that this extension is more than just an \textit{ad hoc} construction. After this analysis, we make a short excursus into dilation theory, pointing out a possible future direction. As a by-product of viewing $C_0$-semigroups through categorical glasses, we also revisit the work on Banach direct sums\footnote{In the Hilbert space setting the author has elsewhere \cite{N} generalised the notion of direct sums of $C_0$-semigroups, here treated as $UB$-coproducts, to direct integrals of $C_0$-semigroups.} by Lachowicz and Moszy\'nski \cite{LM} through the perspective of coproducts.

\section{Categorical preliminaries}\label{sec:2}

Because this paper is targeted at an operator theoretic audience, we introduce three categorical notions -- adjunctions, universal morphisms, and coproducts for those who might be unfamiliar with them. The definitions below are taken from \cite[Ch.~III,~IV]{MacL} and we assume a basic understanding of what categories, objects, morphisms, and functors are (see \cite[Ch.~I]{MacL}). We will use the following two categories and the canonical functors between them as examples in this section to illuminate the more complicated ideas.

The first example is $\mathbf{Set}$, the category of `small sets'. The condition `small' deals with tricky set theoretic issues which we will not go into, however `small' includes nearly all the usual sets we use (see \cite[p.~12]{MacL}). The objects of $\mathbf{Set}$ are all small sets, its morphisms are all the functions between the sets with the usual composition, and the identity for each object is the standard set identity function. The second example we shall use in conjunction with $\mathbf{Set}$ is $\mathbf{Vct}_K$, the category of vector spaces over a fixed field $K$. The objects of $\mathbf{Vct}_K$ are all the vector spaces over $K$, its morphisms are all the linear transformations between the vector spaces with the usual composition, and the identity for each object is the identity transformation.

Define the object function $E: \ob(\mathbf{Set}) \to \ob(\mathbf{Vct}_K)$ by assigning to each set $X$, the vector space $EX$ with basis $X$. The vectors of $EX$ are the formal finite linear combinations $\sum \lambda_i x_i$ with scalar coefficients $\lambda_i \in K$ and $x_i \in X$ and the vector operations are the obvious ones. Define the morphism function $E$ by sending each morphism $p:X \to Y$ to the linear transformation \begin{align*}Ep: EX & \longrightarrow EY \\ \sum \lambda_i x_i & \longmapsto \sum \lambda_i p(x_i).\end{align*} This defines a functor $E: \mathbf{Set} \to \mathbf{Vct}_K$.

The appropriately named \textit{forgetful functor} $F : \mathbf{Vct}_K \to \mathbf{Set}$ is defined by forgetting all algebraic structure, sending a vector space to the set of all vectors and sending linear transformations to the set maps determined by the tranformations. These two functors relate to one another in a way that will be explored now. Note that for $X\in\mathbf{Set}$, $FEX$ is not the same as $X$, as the formal linear combinations in the former are now elements of their own.

Let us consider any two categories $C$ and $D$ and functors $G: C\to D$ and $H: D \to C$. Then for any objects $c \in C$ and $d\in D$, we can ask the question of when it is the case that there is a bijection $\hom(Gc,d) \cong \hom(c,Hd)$. The eyes of any operator theorist should immediately light up upon seeing this expression due to the similarity with adjoint operators and dual spaces. We formalise this intuitive categorical question of adjoints and duals into the following definition.

\begin{definition}\label{adjdef}
	Let $C$ and $D$ be categories. An \textit{adjunction} from $C$ to $D$ is a triple $\langle G,H,\varphi\rangle$ where $G : C \to D$ and $H: D \to C$ are functors and $\varphi$ is a function which assigns to each pair of objects $c \in C, d \in D$ a bijection
	$$\varphi = \varphi_{c,d} : \hom(Gc,d) \cong \hom(c,Hd)$$ such that for any morphisms $f: c' \to c$ in $C$ and $g: d \to d'$ in $D$, the following diagrams commute:
	\begin{figure}[H]   
		{
			\begin{tikzpicture}[every node/.style={midway}]
			\matrix[column sep={4em,between origins}, row sep={2em}] at (0,0) {
				\node(H1) {$c'$}  ; \\
				\node(H3) {$c$}; \\
			};
			\draw[->] (H1) -- (H3) node[anchor=east] {$f$};
			\end{tikzpicture}
		}
		{
			\begin{tikzpicture}[every node/.style={midway}]
			\matrix[column sep={10em,between origins}, row sep={2em}] at (0,0) {
				\node(H1) {$\hom(Gc,d)$}  ; & \node(H2) {$\hom(c,Hd)$}; \\
				\node(H3) {$\hom(Gc',d)$}; & \node (H4) {$\hom(c',Hd),$};\\
			};
			\draw[->] (H1) -- (H2) node[anchor=south]  {$\varphi$};
			\draw[->] (H3) -- (H4) node[anchor=south] {$\varphi$};
			\draw[->] (H1) -- (H3) node[anchor=east] {$(Gf)^*$};
			\draw[->] (H2) -- (H4) node[anchor=west] {$f^*$};
			\end{tikzpicture}
			
			\begin{tikzpicture}[every node/.style={midway}]
			\matrix[column sep={4em,between origins}, row sep={2em}] at (0,0) {
				\node(H1) {$d$}  ; \\
				\node(H3) {$d'$}; \\
			};
			\draw[->] (H1) -- (H3) node[anchor=east] {$g$};
			\end{tikzpicture}
		}
		{
			\begin{tikzpicture}[every node/.style={midway}]
			\matrix[column sep={10em,between origins}, row sep={2em}] at (0,0) {
				\node(H1) {$\hom(Gc,d)$}  ; & \node(H2) {$\hom(c,Hd)$}; \\
				\node(H3) {$\hom(Gc,d')$}; & \node (H4) {$\hom(c,Hd').$};\\
			};
			\draw[->] (H1) -- (H2) node[anchor=south]  {$\varphi$};
			\draw[->] (H3) -- (H4) node[anchor=south] {$\varphi$};
			\draw[->] (H1) -- (H3) node[anchor=east] {$g_*$};
			\draw[->] (H2) -- (H4) node[anchor=west] {$(Hg)_*$};
			\end{tikzpicture}
		}
	\end{figure} Here $k^*(j) = j \circ k, \; j \in \hom(y,z),$ and $k_*(l) = k \circ l, \; l \in \hom(z,x),$ for any morphism $k: x\to y$. In this case, $G$ is called the left-adjoint of $H$.
\end{definition}

\begin{example}\label{exadj}
	Consider the functors $E : \mathbf{Set} \to \mathbf{Vct}_K$ and $F : \mathbf{Vct}_K \to \mathbf{Set}$ described earlier. Each set function of the form $p:X \to FW$, extends uniquely to a linear transformation $p' : EX \to W$ defined by $p'(\sum \lambda_i x_i) = \sum \lambda_i p(x_i)$. The correspondence $\psi : p\mapsto p'$ has an inverse given by the restriction of $p'$ to $X$, $\varphi : p' \mapsto p'|_X$. Thus, we have a bijection
	$$\varphi = \varphi_{X,W} : \hom(EX,W) \cong \hom(X,FW)$$ defined in the same way for all sets $X$ and vector spaces $W$. Routine calculation shows that the diagrams in Definition \ref{adjdef} commute, providing an adjunction $\langle E,F,\varphi\rangle$ from $\mathbf{Set}$ to $\mathbf{Vct}_K$.
\end{example}

Not only are adjunctions interesting from the theoretical perspective of adjoints and duals, they are also useful because they provide `natural transformations' and universal morphisms, via what is known as Yoneda's Lemma and are ubiquitous in abstract algebra and elsewhere. We will not discuss these natural transformations or Yoneda's Lemma (see \cite[Ch.~III,~IV]{MacL}) but instead we directly define universal morphisms.

\begin{definition}\label{defum}
	Let $C$ and $D$ be categories, $H: D \to C$ a functor, and $c$ an object of $C$. A \textit{universal morphism} from $c$ to $H$ is a pair $\langle a,u\rangle$ consisting of an object $a$ of $D$ and a morphism $u: c \to Ha$ of $C$, such that for every pair $\langle d,f\rangle$ consisting of an object $d$ of $D$ and a morphism $f: c \to Hd$ of $C$, there is a unique morphism $f':a \to d$ of $D$ with $f=(Hf')\circ u.$ In other words, every morphism in $C$ from $c$ to an object in the image of $H$ factors uniquely through the universal morphism $u$, as in the following commutative diagram:
	
	\begin{figure}[H]   
		{
			\begin{tikzpicture}[every node/.style={midway}]
			\matrix[column sep={5em,between origins}, row sep={3em}] at (0,0) {
				\node(H1) {$c$}  ; & \node(H2) {$Ha$}; \\
				& \node (H4) {$Hd$};\\
			};
			\draw[->] (H1) -- (H2) node[anchor=south]  {$u$};
			\draw[->] (H1) -- (H4) node[anchor=north east] {$f$};
			\draw[dashed,->] (H2) -- (H4) node[anchor=west] {$Hf'$};
			\end{tikzpicture}
		}
		{
			\begin{tikzpicture}[every node/.style={midway}]
			\matrix[column sep={5em,between origins}, row sep={3em}] at (0,0) {
				\node(H1) {$a$}  ; \\
				\node(H3) {$d$.};\\
			};
			\draw[dashed,->] (H1) -- (H3) node[anchor=east] {$f'$};
			\end{tikzpicture}
		}
	\end{figure}
\end{definition}

\begin{example}
	Continuing with our examples, $\langle EX, u\rangle$ is a universal morphism from a set $X$ to the forgetful functor $F:\mathbf{Vct}_K \to \mathbf{Set}$ where $u: X \to FEX$ is the map that sends each $x\in X$ to $1\cdot x$ regarded as an element of the set made of vectors of $EX$. This can be seen from the fact that for any other vector space $W$, each function $p:X\to FW$ can be extended to a unique linear transformation $p': EX \to W$ as in Example \ref{exadj} and factors into $p= (Fp') \circ u$ as $x \mapsto 1\cdot x \mapsto 1 \cdot p(x)$. Diagrammatically, the following commutes:
	\begin{figure}[H]   
		{
			\begin{tikzpicture}[every node/.style={midway}]
			\matrix[column sep={5em,between origins}, row sep={3em}] at (0,0) {
				\node(H1) {$X$}  ; & \node(H2) {$FEX$}; \\
				& \node (H4) {$FW$};\\
			};
			\draw[->] (H1) -- (H2) node[anchor=south]  {$u$};
			\draw[->] (H1) -- (H4) node[anchor=north east] {$p$};
			\draw[dashed,->] (H2) -- (H4) node[anchor=west] {$Fp'$};
			\end{tikzpicture}
		}
		{
			\begin{tikzpicture}[every node/.style={midway}]
			\matrix[column sep={5em,between origins}, row sep={3em}] at (0,0) {
				\node(H1) {$EX$}  ; \\
				\node(H3) {$W$.};\\
			};
			\draw[dashed,->] (H1) -- (H3) node[anchor=east] {$p'$};
			\end{tikzpicture}
		}
	\end{figure}
\end{example}

Informally speaking, it may be useful for more complicated examples to think of universal morphisms as a way of uniquely (up to isomorphism) factoring a problem into one component that is always the same (universal) and one component that is easier to deal with\footnote{I actually first got the idea of thinking this way from watching \url{https://www.youtube.com/watch?v=qHuUazkUcnU&ab_channel=mlbaker}, where the notion of tensor products is explained as a standardising tool for turning multilinear maps into linear maps.}. In the terms of Definition \ref{defum}, we may want to understand the morphisms between objects in a less understood category $C$ by describing them in terms of morphisms between objects in a better understood category $D$. If there is a universal morphism, this allows us to factor morphisms in $C$ of the form $f: c \to Hd$ into two parts -- a universal component $u$ that is the same regardless of $f$ and a component $Hf'$ which can be described using a morphism $f'$ between objects in $D$ only. Hence we can use the universal morphism to efficiently describe $f$, a morphism in $C$, in terms of $f'$, a morphism in $D$.

Having introduced universal morphisms, we can now define the coproduct. For any category $C$, let $C^I$ be the category of functors between $C$ and $I$ where $I$ is understood as the discrete category for any set $I$, that is, the category of objects with only identity morphisms. Then the functor category $C^I$ has as its objects the $I$-indexed families $a = \{a_i : i \in I\}$ of objects of $C$. The diagonal functor $\Delta : C \to C^I$ sends each $c \in C$ to the constant family where all $c_i =c$ and all morphisms to $1_c$.

\begin{definition}\label{2d1}
	A universal morphism from an object $a = \{a_i : i \in I\}$ of $C^I$ to the functor $\Delta$ is called a \textit{coproduct diagram}. It consists of an object $\amalg_{i} a_i \in C$ called the \textit{coproduct object} and a collection of morphisms $j_i : a_i \to \amalg_{i} a_i$ called \textit{coproduct injections} such that for any collection of morphisms $k_i : a_i \to d$, there is a unique $h: \amalg_{i} a_i \to d$ with $ k_i = h \circ j_i, \; i \in I$.
\end{definition}

The morphisms $j_i$ are called injections, though these are not required to be `injective' in any way. If $|I| = 2$, we can visualise the universality of the diagram as follows:

\begin{figure}[H] 
	\begin{tikzpicture}[every node/.style={midway}]
	\matrix[column sep={6 em,between origins}, row sep={4em}] at (0,0) {
		\node(H1) {$a_1$} ; & \node(H2) {$a_1 \amalg a_2$} ; & \node(H3) {$a_2$} ; \\
		; & \node(H5) {$d$.}; & ; \\
	};
	\draw[->] (H1) -- (H2) node[anchor=south]  {$j_1$};
	\draw[->] (H3) -- (H2) node[anchor=south] {$j_2$};
	\draw[dashed,->] (H2) -- (H5) node[anchor=west] {$h$};
	\draw[->] (H1) -- (H5) node[anchor=north east] {$k_1$};
	\draw[->] (H3) -- (H5) node[anchor=north west] {$k_2$};
	\end{tikzpicture}
\end{figure}

\begin{example}
	In the case of objects $\{W_i : i\in I\}$ in $\mathbf{Vct}_K$, the coproduct is just the direct sum $\bigoplus_{i\in I} W_i$ consisting of vectors with at most finitely many non-zero components together with the obvious embeddings $j_i : W_i \hookrightarrow \bigoplus_{i\in I} W_i$ given by sending $w_i \in W_i$ to the vector with $w_i$ in the $i$-th component and zeros elsewhere.
\end{example}

We warn that in this paper, the word `natural' will be bandied about in the informal generic mathematical sense, and not in the specific categorical sense.

\section{The category of operator semigroups}\label{sec:3}

In representation theory, the so-called `natural transformations' between representations within the categorical context are intertwining operators (see \cite[Ch.~II \S4]{MacL} for example). In our setting where operator semigroups are considered as representations of $\R_+$, given two Banach spaces $X,Y$ and two semigroups $T$ on $X$, $S$ on $Y$, an intertwining operator between the pairs $(X,T)$, $(Y,S)$ would be an operator $U \in \B(X,Y)$ such that the following diagram commutes for all $t\ge 0$:

\begin{center}
	\begin{tikzpicture}[every node/.style={midway}]
	\matrix[column sep={4em,between origins}, row sep={2em}] at (0,0) {
		\node(H1) {$X$}  ; & \node(H2) {$Y$}; \\
		\node(H3) {$X$}; & \node (H4) {$Y$.};\\
	};
	\draw[->] (H1) -- (H2) node[anchor=south]  {$U$};
	\draw[->] (H3) -- (H4) node[anchor=south] {$U$};
	\draw[->] (H1) -- (H3) node[anchor=east] {$T(t)$};
	\draw[->] (H2) -- (H4) node[anchor=west] {$S(t)$};
	\end{tikzpicture}
\end{center}

We can state this in the following definition.

\begin{definition}
	Let $\mathbf{C_0SG}$ be the category of strongly continuous semigroups of operators on Banach spaces with objects \begin{align*}\ob(\mathbf{C_0SG}) := \{(X,T) : X & \text{ is a Banach space, } \\
	& T:\R_+ \to \B(X) \text{ is a } C_0\text{-semigroup}\}\end{align*} and morphisms $$\hom((X,T),(Y,S)) := \{U \in \B(X,Y) : UT(t) = S(t)U, \; t \in \R_+\}.$$ The identity morphism for an object $(X,T)$ is the identity map $I_{(X,T)}=I \in \B(X)$.
\end{definition}

\begin{remark}\label{2ra}
	It is standard to characterise extensions of operator semigroups and embeddings of Banach spaces within larger Banach spaces through isometric intertwining operators. This will be discussed in the next section. We will also identify two objects $(X,T), (Y,S)$ as the same object if there is an isometric isomorphism $i : X\to Y$ intertwining $T$ and $S$.
\end{remark}

Almost exactly the same definition but with $C_0$-semigroups replaced by $C_0$-groups leads to the definition of the category $\mathbf{C_0G}$ and we can do the same for many more examples such as the category of norm-continuous semigroups. One can also use the same way to define a category of bounded operators where the objects are given by Banach spaces paired with bounded linear operators and the morphisms are again intertwining operators. Indeed, for a single bounded linear operator $L$ on a Banach space $X$, one can form the associated discrete semigroup (or representation) $T_L:\Z_+ \to \B(X)$ of $\Z_+$ by setting $$T_L(n) = L^n, \quad n \in \Z_+.$$ Much of what follows in this paper can also be just as easily done for these discrete representations.

Thinking about $C_0$-semigroups naturally leads to thinking about the generators of these semigroups and so we analogously define a category for them. We refer the reader to \cite[Ch.~II, Definition 1.2]{EN} for the definition of the generator of a strongly continuous operator semigroup. In what follows, $D(A)$ denotes the domain of an (unbounded) operator $A$.

\begin{definition}
	Let $\mathbf{C_0SGG}$ be the category of generators of strongly continuous semigroups of operators on Banach spaces with objects \begin{align*}\ob(\mathbf{C_0SGG}) := \{(X,A) : X & \text{ is a Banach space, } \\
	& A:D(A) \to X \text{ is the generator of a } C_0\text{-semigroup}\}\end{align*} and morphisms $$\hom((X,A),(Y,B)) := \{U \in \B(X,Y) : Ux\in D(B) \text{ and } UAx = BUx, \; x \in D(A)\}.$$ The identity morphism for an object $(X,A)$ is the identity map $I_{(X,A)}=I \in \B(X)$.
\end{definition}

Note that this definition requires that a morphism $U$ from $(X,A)$ to $(Y,B)$ must map the domain of $A$ into the domain of $B$. $\mathbf{C_0SGG}$ can of course be thought of as the subcategory of the category of closed operators defined in similar fashion and we can also define the category of generators of strongly continuous groups of operators in the same way. The link between $\mathbf{C_0SG}$ and $\mathbf{C_0SGG}$ is the obvious one.

\begin{lemma}\label{2l1}
	Let $T,S$ be $C_0$-semigroups on Banach spaces $X,Y$ with generators $A,B$ respectively and let $U \in B(X,Y)$. Then $$UT(t) = S(t)U, 
	\quad t \in \R_+,$$ if and only if for all $x\in D(A)$, $UAx \in D(B)$ and $UAx = BUx.$
\end{lemma}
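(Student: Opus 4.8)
The statement is the standard equivalence between intertwining a $C_0$-semigroup and intertwining its generator, and the plan is to prove the two implications separately using the fundamental relations between a semigroup and its generator.

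\emph{The forward direction.} Assume $UT(t) = S(t)U$ for all $t \ge 0$. Fix $x \in D(A)$. I would form the difference quotient $\tfrac{1}{t}\bigl(S(t) - I\bigr)Ux = \tfrac{1}{t}\bigl(UT(t) - U\bigr)x = U\bigl(\tfrac{1}{t}(T(t) - I)x\bigr)$, using the intertwining relation in the second equality. Since $x \in D(A)$, the vector $\tfrac{1}{t}(T(t)-I)x \to Ax$ as $t \downarrow 0$, and because $U$ is bounded (hence continuous) the right-hand side converges to $UAx$. Therefore the left-hand side converges, which is exactly the statement that $Ux \in D(B)$ with $BUx = UAx$. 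This direction is quick and uses only boundedness of $U$ and the definition of the generator as the strong derivative at $0$.

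\emph{The reverse direction.} Assume $Ux \in D(B)$ and $UAx = BUx$ for all $x \in D(A)$. The natural approach is to show, for fixed $x \in D(A)$, that the two orbit maps $t \mapsto UT(t)x$ and $t \mapsto S(t)Ux$ coincide; since $D(A)$ is dense in $X$ and all operators involved are bounded, this yields $UT(t) = S(t)U$ on all of $X$. For $x \in D(A)$, $T(t)x \in D(A)$ for all $t$ and the map $t\mapsto T(t)x$ is continuously differentiable with derivative $AT(t)x = T(t)Ax$; applying $U$ and the hypothesis, $t \mapsto UT(t)x$ is differentiable with derivative $U A T(t)x = B U T(t)x$. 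So $v(t) := UT(t)x$ satisfies the abstract Cauchy problem $v'(t) = Bv(t)$, $v(0) = Ux$, whose solution (for initial data in $D(B)$) is unique and equals $S(t)Ux$. One clean way to see the uniqueness: set $w(t) := S(s-t)UT(t)x$ for fixed $s$ and $t \in [0,s]$, and compute $\tfrac{d}{dt}w(t) = -S(s-t)B\,UT(t)x + S(s-t)\,\tfrac{d}{dt}(UT(t)x) = -S(s-t)BUT(t)x + S(s-t)BUT(t)x = 0$, so $w$ is constant, giving $w(0) = S(s)Ux = w(s) = UT(s)x$.

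\emph{Main obstacle.} The only real subtlety is the differentiability bookkeeping in the reverse direction — ensuring $UT(t)x$ lands in $D(B)$ for every $t$ (it does, since $T(t)x \in D(A)$ and the hypothesis applies to $T(t)x$) and justifying that $t \mapsto S(s-t)UT(t)x$ is differentiable with the product-rule derivative computed above (this uses that $S(\cdot)y$ is differentiable for $y \in D(B)$ and that $UT(t)x$ is a differentiable $D(B)$-valued curve, so one needs the standard lemma that $t \mapsto S(s-t)y(t)$ is differentiable when $y$ is). All of this is routine $C_0$-semigroup calculus from \cite[Ch.~II]{EN}, so I expect no genuine difficulty, only care with domains.
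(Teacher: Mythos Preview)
Your forward direction is identical to the paper's. Your reverse direction is correct but takes a different route: the paper does not argue via uniqueness of classical solutions to the Cauchy problem. Instead, from $UAx=BUx$ on $D(A)$ it deduces the resolvent identity $U(\lambda-A)^{-1}=(\lambda-B)^{-1}U$ for $\lambda$ large, rewrites each resolvent as the Laplace transform of the respective semigroup via \cite[Ch.~II, Theorem~1.10]{EN}, and then invokes uniqueness of Laplace transforms to conclude $UT(t)=S(t)U$. Your argument via $w(t)=S(s-t)UT(t)x$ is more self-contained (no Laplace machinery needed) and makes the role of the domain condition transparent at each $t$; the paper's approach sidesteps all differentiability bookkeeping by working entirely with bounded operators (resolvents and integrals), at the cost of citing the Laplace uniqueness theorem. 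Both are standard and of comparable length.
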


\begin{proof}
	Suppose $U$ intertwines $T$ and $S$ and let $x \in D(A)$. Then
	\begin{align*}UAx = U\left(\lim_{t\to 0^+}\frac{T(t)x-x}{t}\right) & = \lim_{t \to 0^+}\frac{UT(t)x - Ux}{t} = \lim_{t \to 0^+}\frac{S(t)Ux - Ux}{t} = BUx.\end{align*} The existence of the limit implies that $Ux\in D(B)$. Now assume that $Ux \in D(B)$ and $UAx = BUx$ for all $x \in D(A)$. Simple algebra gives us $$U(\lambda-A)^{-1}x = (\lambda-B)^{-1}Ux, \quad x \in X,$$ for positive $\lambda$ sufficiently large enough to be in the resolvent set of both $A$ and $B$ (\cite[Ch.~II, Theorem  3.8]{EN}). Next, \cite[Ch.~II, Theorem 1.10]{EN} implies that the equation
	\begin{align*}\displaystyle\int_{0}^{\infty} e^{-\lambda t}UT(t)x\, dt = U\displaystyle\int_{0}^{\infty} e^{-\lambda t}T(t)x\, dt & = U(\lambda-A) ^{-1}x \\ & = (\lambda-B) ^{-1}Ux = \displaystyle\int_{0}^{\infty} e^{-\lambda t}S(t)Ux\, dt\end{align*} holds for all $x \in X$ and $\lambda $ sufficiently large, proving the converse by the uniqueness of Laplace transforms (see \cite[Theorem 1.7.3]{ABHN}).
\end{proof}

Let $G:\mathbf{C_0SGG}\to \mathbf{C_0SG}$ be the natural map that takes generators to the semigroups that they generate and intertwining operators to themselves. By Lemma \ref{2l1}, $G$ is clearly functorial. We call $G$ the \textit{generation functor} and it is not hard to see that it is an isomorphism. This allows us to rephrase some of the classical $C_0$-semigroup theorems.


\begin{example} We have the following relations through $G$ and $G^{-1}$.
	\begin{enumerate}[(i)]
		\item By exponentiation, the category of bounded linear operators considered as a subcategory of $\mathbf{C_0SGG}$ is isomorphic to the subcategory of norm-continuous semigroups through the restriction of $G$.
		\item By Stone's theorem, the category of skew-adjoint closed operators on Hilbert spaces considered as a subcategory of $\mathbf{C_0SGG}$ is isomorphic to the subcategory of strongly continuous unitary semigroups (on Hilbert spaces) through the appropriate restriction of $G$.
	\end{enumerate}
\end{example}

If we think of $G$ as a functional calculus like exponentiation, $e^{t-}$, we can pose the question on the existence of a spectral mapping theorem in terms of the following commutative diagram where $\sigma(-)$ denotes taking the spectrum of the argument:

\begin{center}
	\begin{tikzpicture}[every node/.style={midway}]
	\matrix[column sep={10em,between origins}, row sep={4em}] at (0,0) {
		\node(H1) {$A$}  ; & \node(H2) {$T(t) = e^{tA}$}; \\
		\node(H3) {$\sigma(A)$}; & \node (H4) {$\sigma(e^{tA})\setminus\{0\}$.};\\
	};
	\draw[->] (H1) -- (H2) node[anchor=south]  {$G = e^{t-}$};
	\draw[->] (H3) -- (H4) node[anchor=south] {$e^{t-}$};
	\draw[->] (H1) -- (H3) node[anchor=east] {$\sigma(-)$};
	\draw[->] (H2) -- (H4) node[anchor=west] {$\sigma(-)\setminus\{0\}$};
	\end{tikzpicture}
\end{center}

That is, when is it true that $\sigma(e^{tA})\setminus\{0\} = e^{t\sigma(A)} :=  \{e^{t\lambda} : \lambda \in \sigma(A)\}$ (see \cite[Ch.~IV, \S3]{EN})? This also suggests that commutative diagrams might also prove to be a useful tool to study questions pertaining to functional calculus, but such an exploration is beyond the scope of this article.

\section{Extension functor}\label{sec:4}

In this section, we consider Arens' construction suitably adapted as in \cite{BG} to extend operator semigroups. We ask questions of this extension in the categorical setting of operator semigroups. In particular, we show that it is functorial. The construction is in fact for general semigroups with lower bounds, but we will restrict to the case where the semigroup $T$ is expansive.

Though the construction itself is not original, the explicit form of the construction is of fundamental importance for this article and will be referred to in subsequent discussion. Hence we first detail the extension in its own subsection.

\subsection{The Arens construction}\label{const}

The following theorem is essentially just the application of \cite[Theorem 1.1]{BG} to the evaluation of an expansive $C_0$-semigroup at the point $t=1$ (see also \cite[Proposition 5.3]{BG}). In the proof, we include the all-important construction.

\begin{theorem}\label{3t1}
	Let $T$ be an expansive $C_0$-semigroup on the Banach space $X$. There exists a $C_0$-group $\tilde{T}$ on a Banach space $\tilde{X}$ and an isometry $\rho \in B(X,\tilde{X})$ such that $\rho T(t) = \tilde{T}(t)\rho$, $\|\tilde{T}(t)\| = \|T(t)\|$ for $t\ge0$, and $\|\tilde{T}(t)\| \le 1$ for $t<0$.
\end{theorem}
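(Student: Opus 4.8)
The plan is to reduce the statement about the semigroup $T$ to the Arens inverse-producing extension of the single operator $T(1)$, and then rebuild the whole group by exploiting the functional equation. First I would set $L := T(1)$. Since $T$ is expansive, $\|Lx\| = \|T(1)x\| \ge \|x\|$ for all $x$, so in particular $\inf_{x \ne 0}\|Lx\|/\|x\| \ge 1 > 0$; this is exactly the lower-bound hypothesis needed to invoke \cite[Theorem 1.1]{BG} (the operator version of Arens' theorem). That gives a Banach space $\tilde X$ containing $X$ isometrically via an isometry $\rho$, together with an invertible operator on $\tilde X$ extending $L$ whose inverse has norm at most $1$. I will write this extended operator again as (the restriction of) $\tilde T(1)$ once the group is built. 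The key structural point from the construction is not merely that $\tilde X$ exists, but its explicit form: $\tilde X$ is built as a completion of a space of "formal trajectories" — roughly, equivalence classes of sequences or of $X$-valued data on which the forward semigroup already acts — so that the backward shift (application of the putative $\tilde T(-t)$) is naturally defined on it. I would recall this explicit description from the construction in \cite{BG} since the rest of the argument reads it off.

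Next I would define the group $\tilde T$ on $\tilde X$. For $t \ge 0$, $\tilde T(t)$ is the obvious extension of $T(t)$ obtained by letting $T(t)$ act on the formal-trajectory data (this is well defined and contractive-to-$X$-norm-preserving because $T(t)$ already acts on $X$ and commutes with $T(1)$, hence descends to the completion). For $t < 0$, I would define $\tilde T(t)$ using the inverse: writing $t = n - s$ with $n \in \Z$, $s \in [0,\infty)$ chosen so the expression makes sense, set $\tilde T(t) := \tilde T(1)^{-k}\,\tilde T(r)$ for suitable $k \in \N$ and $r \ge 0$ with $r - k = t$, and check this is independent of the decomposition using $\tilde T(1)^{-1}\tilde T(1) = I$ and the semigroup law on the forward part. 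Concretely, for $t \in (-1,0]$ one puts $\tilde T(t) := \tilde T(1)^{-1}\tilde T(1+t)$, and for general negative $t$ one iterates. One then verifies $\tilde T(s+t) = \tilde T(s)\tilde T(t)$ for all real $s,t$ by splitting into cases according to signs, each case reducing to the forward semigroup identity and the identity $\tilde T(1)^{-1}\tilde T(1) = I$.

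The remaining points are the norm estimates, strong continuity, and the intertwining. The intertwining $\rho T(t) = \tilde T(t)\rho$ for $t \ge 0$ is immediate from the definition of $\tilde T(t)$ on the forward data. For the norms: $\|\tilde T(t)\| \ge \|\tilde T(t)\rho x\|_{\tilde X}/\|\rho x\|_{\tilde X} = \|T(t)x\|/\|x\|$ gives $\|\tilde T(t)\| \ge \|T(t)\|$ for $t \ge 0$, and the reverse inequality $\|\tilde T(t)\| \le \|T(t)\|$ for $t \ge 0$ should fall out of the explicit construction, since the extended operator acts "coordinatewise" by $T(t)$ on the trajectory data and the quotient/completion norm is built so as not to increase the operator norm — this is part of what \cite[Theorem 1.1]{BG} delivers (the extension of a bounded operator has the same norm). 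For $t < 0$ we get $\|\tilde T(t)\| \le 1$: writing $\tilde T(t) = \tilde T(1)^{-k}\tilde T(r)$ with $r = t + k \in [0,1)$ and $k \ge 1$, we have $\|\tilde T(1)^{-1}\| \le 1$ and $\|\tilde T(r)\| = \|T(r)\| $; to conclude $\|\tilde T(t)\| \le 1$ one wants $\|T(r)\| \le \|\tilde T(1)^{k-?}\|$-type cancellation, so the clean route is to instead write, for $t<0$, $\tilde T(t) = \bigl(\tilde T(-t)\bigr)^{-1}$ and use that $\tilde T(-t)$ is expansive on $\tilde X$ (it extends the expansive $T$ on the dense part and the norm-one inverse of $\tilde T(1)$ controls the rest), whence its inverse has norm $\le 1$. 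Strong continuity of $\tilde T$ at every real $t$ follows from strong continuity of the forward semigroup together with continuity of $\tilde T(1)^{-1}$ and an $\varepsilon$-density argument on the dense subspace of finite trajectories. I expect the main obstacle to be the norm bookkeeping in the last paragraph: making sure the construction in \cite{BG} really yields $\|\tilde T(t)\| = \|T(t)\|$ for $t \ge 0$ (not just $\le$ or $\ge$) and, correspondingly, that the backward operators come out with norm exactly $\le 1$ — this requires reading the precise norm on $\tilde X$ from the Arens construction rather than treating it as a black box, and verifying that the forward operators are genuinely isometric onto their image in that norm.
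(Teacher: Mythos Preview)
Your approach is the paper's approach: apply the Arens/Batty--Geyer construction to $L=T(1)$, obtaining $\tilde X=\ell_1(X)/J$ with $J=\overline{\Ran(I-T_1R)}$ (this is your space of ``formal trajectories'') and $\rho(x)=x\mathbf{e}_1+J$; extend each $T(t)$ for $t\ge 0$ coordinatewise to $\tilde T(t)$ on the quotient; realise $\tilde T(1)^{-1}$ as the right shift $V$ (so $\|V\|\le 1$); and pass to the group. The paper does exactly this, including an iterative triangle-inequality argument from expansiveness to show $\rho$ is isometric, and an $\varepsilon/3$ argument on finitely supported sequences for strong continuity.

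One point to flag in your handling of $\|\tilde T(t)\|\le 1$ for non-integer $t<0$. Your proposed ``clean route'' --- show $\tilde T(s)$ is expansive on $\tilde X$ for $s>0$ because ``it extends the expansive $T$ on the dense part and the norm-one inverse of $\tilde T(1)$ controls the rest'' --- does not work as stated. The image $\rho(X)$ is \emph{not} dense in $\tilde X$; the dense subspace is $\spn\bigcup_{n\ge 0}\tilde T(-n)\rho(X)$ (cosets of finitely supported sequences), and on vectors of the form $\tilde T(-n)\rho(x)$ the expansiveness of $\tilde T(s)$ does not follow from expansiveness of $T(s)$ on $X$ together with $\|\tilde T(-1)\|\le 1$ (composing a contraction with an expansive map need not be expansive). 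Your first attempt via $\tilde T(t)=\tilde T(1)^{-k}\tilde T(r)$ fails for the reason you noticed: $\|T(r)\|$ can exceed $1$. In fact the paper's own proof only establishes $\|\tilde T(-1)\|\le 1$ explicitly and then extends ``in the natural way'', tacitly deferring the full bound to \cite[Proposition~5.3]{BG}; so your instinct that this is where the genuine bookkeeping lives is correct, but neither of your suggested shortcuts closes it.
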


\begin{proof}
	To begin, first define the sequence space $$\ell_1(X) := \{f : \N \to X : \|f\|_1  < \infty\}$$ equipped with the norm $\|\cdot\|_1$ defined by
	$$\|f\|_1 := \sum_{i=1}^{\infty} \|f(i)\|_X.$$  We henceforth drop the subscript $1$ on the norm. Then write $$T_t f := (T(t)f(n))_n, \quad f \in \ell_1(X), t\geq 0,$$ as the coordinatewise multiplication by $T(t)$ on $\ell_1(X)$ for $t\ge 0$. We will construct $\tilde{X}$ as a quotient of $\ell_1(X)$ in such a way that the inverse of $T_1$ will be the right shift $R$ on this space.
	
	To this end, let $$J := \overline{\{(I-T_1R)f : f \in \ell_1(X)\}},$$ which we want to identify as $0$. Let $\tilde{X} := \ell_1(X)/J$ be the Banach space with the quotient norm and define the natural map $\rho: X \to \tilde{X}$ by $\rho(x) = x \mathbf{e_1} + J$ where for any $z \in X$, $z\mathbf{e_1}$ is the sequence $(z,0,0,...)$. Clearly, $\|\rho(x)\| \leq\|x\|$ for all $x\in X$. To show that $\rho$ is an isometric embedding, we need to show the reverse inequality.
	
	Let $x \in X$ and let $f \in \ell_1(X)$ be a sequence with finite support. Then
	\begin{equation}\label{3a}
	\|x\mathbf{e_1} - (I-T_1R)f\| = \|x - f(1)\| + \displaystyle\sum_{n \geq 2}\|f(n) - T_1f(n-1)\|
	\end{equation} where the sum is finite, as its terms are given by the right shift of a sequence of finite support. Due to the expansiveness of $T$, we get
	\begin{equation*}
	\|x\| \leq \|x - f(1)\| + \|f(1)\| \leq \|x - f(1)\| + \|T_1f(1)\|.
	\end{equation*}
	In the same way
	\begin{equation*}
	\|T_1f(1)\| \leq \|T_1f(1) - f(2)\| + \|T_1f(2)\|.
	\end{equation*}
	Thus,
	\begin{equation*}
	\|x\| \leq \| x- f(1)\| +\|T_1f(1) - f(2)\| + \|T_1f(2)\|.
	\end{equation*}
	Iterating this further shows that
	\begin{equation}\label{3b}
	\|x\| \leq \|x\mathbf{e_1} - (I-T_1R)f\|
	\end{equation}
	by (\ref{3a}). Since sequences with finite support are dense in $\ell_1(X)$ and $(I-T_1R)$ is bounded, (\ref{3b}) holds for all $f \in \ell_1(X)$. By the definition of the quotient norm, $\|x\| \leq \|\rho(x)\|$ for all $x\in X$, proving that $\rho$ is an isometric embedding.
	
	Define $\tilde{T} : \R_+ \to \tilde{X}$ by
	$$\tilde{T}(t)(f+J) := T_t f + J, \quad f \in \ell_1(X), t\ge0.$$ $\tilde{T}(t)$ is well defined since $T_t$ commutes with $T_1$ and $R$, leaving $J$ invariant.
	
	Immediately from the definition, $\rho T(t) = \tilde{T}(t)\rho$ and $\|\tilde{T}(t)\| \leq \|T(t)\|$ for all $t\ge0$. Since $\rho$ is an isometry, $\|\tilde{T}(t)\| = \|T(t)\|$ for all $t\ge0$.
	
	Let $f \in \ell_1(X)$ be any sequence with finite support. It follows that $\tilde{T}(t)(f+J) \to f+J$ as $t \to 0$ as $T$ is a $C_0$-semigroup on $X$. Since the set of sequences with finite support is dense in $\ell_1(X)$ and we have uniform boundedness of $\|\tilde{T}(t)\| = \|T(t)\|$ in any compact interval $[0,t_0]$ for $t_0>0$, a standard $\varepsilon/3$ argument shows that $\tilde{T}(t)$ converges strongly to $I$ in $\tilde{X}$. It is clear that $\tilde{T}$ satisfies the semigroup property and hence is a $C_0$-semigroup on $\tilde{X}$.
	
	We now show that $\tilde{T}(1)$ is invertible in $\tilde{X}.$ Define $V$ on $\tilde{X}$ by $V(f+J) := Rf + J$. As $J$ is invariant under $R$, $V$ is well-defined and $\|V\|\leq\|R\|=1$. For $f \in \ell_1(X)$,
	$$f+ J = (I-T_1R)f + T_1Rf + J = T_1Rf +J = \tilde{T}(1)V(f + J)$$ so that $\tilde{T}(1)V = I.$ Likewise, $V\tilde{T}(1) = I$ as $T_1$ commutes with $R$. Hence $\tilde{T}(1)$ is invertible and $\tilde{T}(1)^{-1} = V.$ Since $\tilde{T}$ is a semigroup and $\tilde{T}(1)^{-1}$ exists, it can be extended in the natural way to a $C_0$-group.
\end{proof}

\begin{remark}
	In light of Remark \ref{2ra}, $\tilde{T}$ can be thought of as an extension of $T$ since the intertwining operator $\rho$ is an isometry.
\end{remark}

\subsection{Categorical properties of the extension}

Now that we have the explicit extension, we are ready to prove various properties of interest about it.

Let $\mathbf{C_0SG_{ex}}$ denote the subcategory of $\mathbf{C_0SG}$ where the semigroups of the objects are expansive and let $\mathbf{C_0G_{ex}}$ denote the subcategory of $\mathbf{C_0G}$ where the groups of the objects are expansive on $\R_+$. Define the map 
\begin{align*}
E:\ob(\mathbf{C_0SG_{ex}}) & \to \ob(\mathbf{C_0G_{ex}}) \\
(X,T) & \mapsto (\tilde{X},\tilde{T}),
\end{align*} that is, by assigning every expansive semigroup with its extension constructed in Theorem \ref{3t1}.

Our goal is to further define $E$ so that it not only maps objects to objects, but also morphisms to morphisms. To this end, we prove the following proposition that lifts intertwining operators to the extension spaces.

\begin{proposition}\label{3p1}
	Given $(X,T),(Y,S) \in \mathbf{C_0SG_{ex}}$ and $U \in \hom((X,T),(Y,S))$, let $\rho$ and $\sigma$ denote the isometries constructed in Theorem \ref{3t1} in extending $(X,T),(Y,S)$ to $(\tilde{X},\tilde{T}),(\tilde{Y},\tilde{S})$ respectively. We can find a unique $\tilde{U} \in \hom((\tilde{X},\tilde{T}),(\tilde{Y},\tilde{S}))$ such that the following diagrams commute for all $t\in \R$\emph{:}
	
	\begin{figure}[H]   
		{
			\begin{tikzpicture}[every node/.style={midway}]
			\matrix[column sep={5em,between origins}, row sep={3em}] at (0,0) {
				\node(H1) {$X$}  ; & \node(H2) {$Y$}; \\
				\node(H3) {$\tilde{X}$}; & \node (H4) {$\tilde{Y}$};\\
			};
			\draw[->] (H1) -- (H2) node[anchor=south]  {$U$};
			\draw[->] (H3) -- (H4) node[anchor=south] {$\tilde{U}$};
			\draw[->] (H1) -- (H3) node[anchor=east] {$\rho$};
			\draw[->] (H2) -- (H4) node[anchor=west] {$\sigma$};
			\end{tikzpicture}
		}
		{
			\begin{tikzpicture}[every node/.style={midway}]
			\matrix[column sep={5em,between origins}, row sep={3em}] at (0,0) {
				\node(H1) {$\tilde{X}$}  ; & \node(H2) {$\tilde{Y}$}; \\
				\node(H3) {$\tilde{X}$}; & \node (H4) {$\tilde{Y}$.};\\
			};
			\draw[->] (H1) -- (H2) node[anchor=south]  {$\tilde{U}$};
			\draw[->] (H3) -- (H4) node[anchor=south] {$\tilde{U}$};
			\draw[->] (H1) -- (H3) node[anchor=east] {$\tilde{T}(t)$};
			\draw[->] (H2) -- (H4) node[anchor=west] {$\tilde{S}(t)$};
			\end{tikzpicture}
		}
	\end{figure}
	Moreover, this $\tilde{U}$ satisfies $\|U\|=\|\tilde{U}\|$.
\end{proposition}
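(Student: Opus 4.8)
The plan is to build $\tilde U$ by lifting $U$ coordinatewise to the sequence spaces used in the construction of Theorem~\ref{3t1} and then passing to the quotients. Concretely, I would define $\ell_1(U):\ell_1(X)\to\ell_1(Y)$ by $(\ell_1(U)f)(n):=U f(n)$; this is bounded with $\|\ell_1(U)\|\le\|U\|$, it commutes with the right shift $R$, and since $UT(t)=S(t)U$ for all $t\ge 0$ it satisfies $\ell_1(U)T_t = S_t\,\ell_1(U)$, in particular $\ell_1(U)T_1 = S_1\,\ell_1(U)$. Consequently $\ell_1(U)(I-T_1R)f = (I-S_1R)\ell_1(U)f$ for every $f$, so $\ell_1(U)$ maps the dense subspace $\{(I-T_1R)f : f\in\ell_1(X)\}$ of $J_X$ into $\{(I-S_1R)g : g\in\ell_1(Y)\}\subseteq J_Y$; by continuity it maps $J_X$ into $J_Y$. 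Hence $\ell_1(U)$ descends to a well-defined bounded operator $\tilde U:\tilde X\to\tilde Y$, $\tilde U(f+J_X):=\ell_1(U)f+J_Y$, with $\|\tilde U\|\le\|\ell_1(U)\|\le\|U\|$.

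Next I would verify the two diagrams. For the first, $\tilde U\rho(x)=\tilde U(x\mathbf{e_1}+J_X)=(Ux)\mathbf{e_1}+J_Y=\sigma(Ux)$, so $\tilde U\rho=\sigma U$. For the second with $t\ge 0$, $\tilde U\tilde T(t)(f+J_X)=\ell_1(U)T_tf+J_Y=S_t\ell_1(U)f+J_Y=\tilde S(t)\tilde U(f+J_X)$, using $\ell_1(U)T_t=S_t\ell_1(U)$. For $t<0$, recall from Theorem~\ref{3t1} that $\tilde T(1)^{-1}$ is the operator $V$ with $V(f+J_X)=Rf+J_X$, and that $\tilde T(t)=\tilde T(t+n)V^n$ for any integer $n$ with $t+n\ge0$ (similarly for $\tilde S$); since $\ell_1(U)R=R\ell_1(U)$ one gets $\tilde U V = V\tilde U$ on the relevant spaces, whence $\tilde U$ intertwines $\tilde T(t)$ and $\tilde S(t)$ for every $t\in\R$. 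Thus $\tilde U\in\hom((\tilde X,\tilde T),(\tilde Y,\tilde S))$.

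For uniqueness, suppose $\tilde U'$ also makes both diagrams commute. The first diagram forces $\tilde U'\rho = \sigma U = \tilde U\rho$, so $\tilde U$ and $\tilde U'$ agree on $\rho(X)$; applying the second diagram at $t=-n$ then forces them to agree on $\tilde T(-n)\rho(X)=V^n\rho(X)$ for every $n\ge0$. Since $V^n\rho(x)=x\mathbf{e_{n+1}}+J_X$, finite linear combinations of such elements are exactly the images in $\tilde X$ of the finite-support sequences in $\ell_1(X)$, which are dense in $\ell_1(X)$; as $\tilde U$ and $\tilde U'$ are continuous, they coincide on all of $\tilde X$. Finally, for the norm identity I already have $\|\tilde U\|\le\|U\|$, and conversely for $x\in X$ one has $\|Ux\|=\|\sigma(Ux)\|=\|\tilde U\rho(x)\|\le\|\tilde U\|\,\|\rho(x)\|=\|\tilde U\|\,\|x\|$ because $\sigma$ and $\rho$ are isometries; taking the supremum over $\|x\|\le1$ gives $\|U\|\le\|\tilde U\|$, hence equality.

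I expect the only mildly delicate points to be (i) checking that $\ell_1(U)$ carries $J_X$ into $J_Y$, which requires combining the algebraic intertwining identity with continuity in order to pass to closures, and (ii) the density step in the uniqueness argument, namely recognising $\spann\{V^n\rho(x):n\ge0,\ x\in X\}$ as the image of the finite-support sequences. Everything else is bookkeeping: expansiveness has already been used once and for all inside Theorem~\ref{3t1}, and the norm equality is precisely where the isometry of $\rho$ and $\sigma$ enters.
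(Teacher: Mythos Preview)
Your proposal is correct and follows essentially the same approach as the paper: define the coordinatewise lift (the paper calls it $M_U$, you call it $\ell_1(U)$), check it sends $J_X$ into $J_Y$ via the identity $M_U(I-T_1R)=(I-S_1R)M_U$, pass to the quotient, verify the diagrams and the norm equality using the isometries $\rho,\sigma$, and prove uniqueness by the density of $\spann\bigl\{\tilde T(-n)\rho(X):n\ge0\bigr\}$ in $\tilde X$. Your treatment of negative $t$ is slightly more explicit than the paper's (which only checks $t=-1$ and leaves the rest implicit), but otherwise the arguments coincide step for step.
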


\begin{proof}
	Let $K,\mathbf{d_1}$ be the subspace and `element' of $\ell_1(Y)$ analogous to $J,\mathbf{e_1}$ of $\ell_1(X)$ in Theorem~\ref{3t1} respectively. Let $M_U f:= (U f(n))_n \in \ell_1(Y), \; f \in \ell_1(X),$ be the coordinatewise multiplication by $U$. Define $\tilde{U}:\tilde{X}\to\tilde{Y}$ by $$\tilde{U}(f+J) := M_U f + K, \quad f \in \ell_1(X).$$ Now
	$$M_U(I-T_1R)f = M_Uf - S_1M_URf = (I - S_1R)M_Uf, \quad f \in \ell_1(X),$$ as $UT(1) = S(1)U$, mapping $J$ to $K$ so that $\tilde{U}$ is well-defined. Clearly, $\tilde{U} \in \B(\tilde{X},\tilde{Y})$, $\tilde{U}\tilde{T}(t) = \tilde{S}(t)\tilde{U}$ for $t\ge0$, and $\|U\| \geq \|\tilde{U}\|$. Furthermore, for $f\in\ell_1(X)$,
	$$\tilde{U}\tilde{T}(-1)(f+J) = \tilde{U}(Rf+J)= M_U Rf + K = RM_Uf + K = \tilde{S}(-1)U(f+J),$$ so that $\tilde{U}\tilde{T}(t) = \tilde{S}(t)\tilde{U}$ for $t\in\R$ including the negative reals. Since
	$$\tilde{U}\rho(x) = \tilde{U}(x\mathbf{e_1}+J) = Ux\mathbf{d_1}+K = \sigma(Ux),\quad x \in X,$$ the diagrams commute and as $\sigma,\rho$ are isometries, $\|U\| = \|\tilde{U}\|$. It remains to show that $\tilde{U}$ is unique. Let $V_1$ and $V_2$ be two operators that intertwine $\tilde{T}$ and $\tilde{S}$ such that $$V_i \rho = \sigma U, \quad i=1,2.$$ Hence $(V_1 - V_2)\rho = 0$. Note that on $\tilde{X} = \ell_1(X)/J$, $\tilde{T}(-1)$ is defined as the right shift on the quotient space and $\rho(X) = \{x\mathbf{e_1} + J : x \in X\}$. Hence
	\begin{equation}\spn\left\{\label{3e1}\bigcup\limits_{n\geq0}\tilde{T}(-n)\rho(X) \right\}= \{f+J : f \in \ell_1(X) \text{ has finite support}\}\end{equation} and this set is dense in $\tilde{X}.$ Since the $V_i$ intertwine $\tilde{T}$ and $\tilde{S}$, $$(V_1-V_2)\tilde{T}(-n)\rho(x) = \tilde{S}(-n)(V_1 - V_2)\rho(x) = 0, \quad x \in X, n\ge0.$$ By density, $V_1 = V_2$ on $\tilde{X}$.
\end{proof}

\begin{remark}\label{3r1}
	Equation (\ref{3e1}) says that the extension $\tilde{X}$ of $X$ is in fact minimal in the sense that there is no proper closed subspace of $\tilde{X}$ containing $X$ which is invariant under the group $\tilde{T}$. This is equivalent to the subspace $\{\tilde{T}(t)\rho(x) : t \in \R, x \in X\}$ being dense in $\tilde{X}$. This minimality makes the extension somewhat natural, even though the construction itself requires the seemingly arbitrary choice of `transforming the right shift into the inverse of $T(1)$'. This extension is in fact equivalent to the more natural one found in \cite{BY} (see \cite[Remark (3) p~.148]{BY}).
\end{remark}

For each $U \in \hom((X,T),(Y,S))$, define $$EU = \tilde{U} \in \hom(E(X,T),E(Y,S)).$$ Hence $E:\mathbf{C_0SG_{ex}} \to \mathbf{C_0G_{ex}}$ now maps objects to objects and morphisms to morphisms. The following theorem is the central observation of this paper.

\begin{theorem}\label{main}
	$E:\mathbf{C_0SG_{ex}} \to \mathbf{C_0G_{ex}}$ is functorial.
\end{theorem}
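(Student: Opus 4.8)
The plan is to verify the two functor axioms directly from the explicit formula $EU = \tilde U$ given by $\tilde U(f+J) = M_U f + K$, using the uniqueness clause of Proposition~\ref{3p1} as the main labour-saving device. Functoriality requires two things: that $E$ preserves identities, i.e. $E(I_{(X,T)}) = I_{E(X,T)}$, and that $E$ preserves composition, i.e. $E(VU) = (EV)(EU)$ for composable morphisms $U \in \hom((X,T),(Y,S))$ and $V \in \hom((Y,S),(Z,R))$. One also implicitly needs that $EU$ is indeed a morphism in $\mathbf{C_0G_{ex}}$ with the right source and target, but that is exactly the content of Proposition~\ref{3p1}, so it may simply be cited.

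For the identity axiom, I would observe that when $U = I \in \B(X)$, the coordinatewise multiplication $M_U$ is the identity on $\ell_1(X)$, so $\tilde U(f+J) = f + J$ is the identity on $\tilde X = \ell_1(X)/J$; this is immediate and needs no uniqueness argument. For composition, the cleanest route is to note that both $(EV)(EU)$ and $E(VU)$ are operators intertwining $\tilde T$ and $\tilde R$ which, composed with the isometry $\rho$, yield $\tau \circ (VU)$ (where $\tau$ is the isometry extending $(Z,R)$): indeed $(EV)(EU)\rho = (EV)\sigma U = \tau V U$ by the commuting square of Proposition~\ref{3p1} applied twice, while $E(VU)\rho = \tau (VU)$ by the same square applied once. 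Since Proposition~\ref{3p1} asserts that such an intertwiner extending a fixed morphism along $\rho$ is \emph{unique}, the two must coincide. Alternatively, and perhaps more transparently, one can argue directly on representatives: $M_{VU} = M_V M_U$ on $\ell_1(X)$ (coordinatewise multiplication is clearly multiplicative in the operator argument), and chasing cosets through $\ell_1(X) \to \ell_1(Y) \to \ell_1(Z)$ shows $E(VU)(f+J) = M_{VU}f + L = M_V M_U f + L = (EV)(M_U f + K) = (EV)(EU)(f+J)$; here one must check that $M_U$ maps $J$ into $K$ (already done in the proof of Proposition~\ref{3p1}) and $M_V$ maps $K$ into $L$, so the composite is well defined at the level of quotients.

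I expect the only genuinely delicate point to be bookkeeping rather than mathematics: one must be careful that the intermediate subspace $K \subseteq \ell_1(Y)$ appearing in the factorisation of $E(VU)$ is the \emph{same} subspace used in defining $EV$ — that is, that the Arens construction applied to $(Y,S)$ produces a canonical $K$, $\mathbf{d_1}$, $\sigma$ independent of which morphism out of $Y$ we happen to be lifting. This is true because the construction in Theorem~\ref{3t1} depends only on the object $(Y,S)$, and Remark~\ref{2ra} lets us treat the resulting extension as fixed once and for all; making this explicit removes any ambiguity. Everything else — multiplicativity of $U \mapsto M_U$, well-definedness on quotients, boundedness — is routine and already essentially contained in the proof of Proposition~\ref{3p1}, so the write-up should be short: identities in one line, composition in two or three via the uniqueness clause.
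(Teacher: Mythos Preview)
Your proposal is correct and essentially matches the paper's proof: the paper's entire argument is precisely your ``alternative'' route, namely the observation that $M_{VU} = M_V M_U$ on $\ell_1(X)$, from which functoriality follows immediately by the explicit formula in Proposition~\ref{3p1}. Your additional uniqueness-based argument is a valid (and arguably cleaner) alternative the paper does not mention, and your careful remarks on identities and bookkeeping go somewhat beyond what the paper writes out, but the core idea is the same.
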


\begin{proof}
	Let $U\in \B(X,Y)$ and $V\in\B(Y,Z)$ for Banach spaces $X,Y,Z$. Observe that $M_{UV}f = M_{V}M_{U}f$ for all $f\in \ell_1(X)$ where $M_W$ denotes coordinatewise multiplication for any bounded linear operator $W$ as in the proof of Proposition \ref{3p1}. The result directly follows from the way intertwining operators are extended in Proposition \ref{3p1}.
\end{proof}

We call $E$ the \textit{extension functor}. A similar construction can be found in \cite[Theorem 3.2]{BG} that creates a functor $\bar{E}:\mathbf{C_0SGG_{ex}} \to \mathbf{C_0SGG}$ extending generators with lower bounds to invertible generators and lifting the intertwining operators to the extended space. Here $\mathbf{C_0SGG_{ex}}$ is the subcategory of semigroup generators $A$ that satisfy $$\|Ax\| \ge \|x\|, \quad x\in D(A).$$ However, the role of $\bar{E}$ is signficantly different from that of $E$ since $\bar{E}$ produces an invertible generator whereas $E$ produces an invertible semigroup. Thus, we do not have that $EG = G\bar{E}$ even when restricted to a common domain.

There is a separate closely related functor which naturally arises in the setting of groups and semigroups. This is the \textit{forgetful functor} $F:\mathbf{C_0G}\to \mathbf{C_0SG}$ and it assigns each $C_0$-group $T:\R \to \B(X)$ to the $C_0$-semigroup $T:\R_+ \to \B(X)$ obtained by forgetting $T(t)$ for $t <0.$ The intertwining operators of groups remain intertwining operators between semigroups, and hence $F$ assigns morphisms to themselves. 
\begin{remark}
	If $(X,T) \in\mathbf{C_0SG_{ex}} \subset \mathbf{C_0SG}$, then $\rho:X\to\tilde{X}$ given by the isometric intertwining operator constructed in Theorem \ref{3t1} can also be considered as a morphism in $\hom((X,T),F(\tilde{X},\tilde{T}))$.
\end{remark}
We can clearly restrict $F$ to groups that are expansive on $\R_+$ and from now on, $F$ will refer to the restricted functor
$$F: \mathbf{C_0G_{ex}} \to \mathbf{C_0SG_{ex}}.$$

The question now is in what way $E$ and $F$ relate to each other. It is emphatically not true that for an object $(X,T) \in \mathbf{C_0SG_{ex}}$, $FE(X,T)$ equals $(X,T)$ up to isomorphism. This is because $E$ `embeds' $X$ into a space that can be strictly larger, but $F$ leaves the space itself untouched. However, it is a different story for the other way around.

\begin{lemma}\label{EFI}
	Let $(Y,S) \in \mathbf{C_0G_{ex}}$ so that $F(Y,(S(t))_{t\in \R}) = (Y,(S(t))_{t \in \R_+})$ and let $(\tilde{Y},\tilde{S}) = EF(Y,S)$. Then the associated intertwining isometry $\kappa : Y \to \tilde{Y}$ that embeds $Y$ into $\tilde{Y}$ is an isomorphism.
\end{lemma}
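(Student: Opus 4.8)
The plan is to show that the isometric embedding $\kappa:Y\to\tilde Y$ coming from Theorem~\ref{3t1} is already surjective, so that, being a surjective isometry, it is an isometric isomorphism (and intertwines $S$ with $\tilde S$ by construction). Recall the construction: $\tilde Y=\ell_1(Y)/K$ where $K=\overline{\{(I-S_1R)f:f\in\ell_1(Y)\}}$, and $\kappa(y)=y\mathbf{e_1}+K$. The extended group $\tilde S$ acts by $\tilde S(-1)(f+K)=Rf+K$, i.e. $\tilde S(-1)$ is the right shift on the quotient. The key structural fact, already recorded in Remark~\ref{3r1} (Equation~(\ref{3e1})), is that $\tilde Y$ is the minimal such extension: the span of $\bigcup_{n\ge0}\tilde S(-n)\kappa(Y)$ is exactly the set of (cosets of) finitely supported sequences, which is dense in $\tilde Y$.

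So the first step is to observe that when $(Y,S)$ is \emph{already} a group, $S(1)$ is invertible on $Y$, and hence $\tilde S(-1)\kappa(y)=\kappa\big(S(-1)y\big)$ actually lands back in $\kappa(Y)$. Concretely, one checks directly in $\ell_1(Y)$ that for $y\in Y$,
\begin{equation*}
R(y\mathbf{e_1})=(0,y,0,\dots)\quad\text{and}\quad (0,y,0,\dots)-(I-S_1R)\big(S(-1)y\,\mathbf{e_1}\big)=S(-1)y\,\mathbf{e_1},
\end{equation*}
so that $Ry\mathbf{e_1}+K=S(-1)y\,\mathbf{e_1}+K=\kappa(S(-1)y)$. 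Thus $\kappa(Y)$ is invariant under $\tilde S(-n)$ for every $n\ge0$. Combining this with the minimality statement (\ref{3e1}), the span of $\bigcup_{n\ge0}\tilde S(-n)\kappa(Y)$ equals $\kappa(Y)$, and this is dense in $\tilde Y$.

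The second step is to upgrade density to equality: since $\kappa$ is an isometry and $Y$ is complete, $\kappa(Y)$ is a closed subspace of $\tilde Y$; being also dense, it is all of $\tilde Y$. Hence $\kappa$ is a surjective isometry, i.e.\ an isometric isomorphism. Finally, $\kappa$ intertwines the group $S$ on $Y$ with $\tilde S$ on $\tilde Y$ for all $t\in\R$ (for $t\ge0$ this is $\rho T(t)=\tilde T(t)\rho$ from Theorem~\ref{3t1}, and for $t<0$ it follows by applying $\kappa$ to $S(-t)^{-1}$ and using $\tilde S(t)\tilde S(-t)=I$), so the identification is an isomorphism of objects in $\mathbf{C_0G_{ex}}$ in the sense of Remark~\ref{2ra}.

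I do not expect a serious obstacle here; the only point requiring a little care is the explicit verification that $\kappa(Y)$ is shift-invariant in the quotient, i.e.\ the computation that $Ry\mathbf{e_1}$ and $S(-1)y\,\mathbf{e_1}$ differ by an element of $K$ — this is exactly where the invertibility of $S(1)$ (available since $S$ is a group) is used, and it is the analogue of the relation $\tilde T(1)V=I$ from the proof of Theorem~\ref{3t1}. Once that is in hand, the density input is quoted verbatim from Remark~\ref{3r1}/Equation~(\ref{3e1}) and the rest is the soft ``closed $+$ dense $=$ everything'' argument.
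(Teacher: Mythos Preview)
Your approach is essentially the same as the paper's: show that $\kappa(Y)$ is invariant under the negative-time operators $\tilde S(-t)$ (the paper does this abstractly via $\bar S(-t)\tilde S(t)=\tilde S(t)\bar S(-t)=I$ on $\kappa(Y)$, you do it by an explicit computation in $\ell_1(Y)/K$ at $t=1$), then invoke minimality (Remark~\ref{3r1}/equation~(\ref{3e1})) together with closedness of $\kappa(Y)$ to conclude $\kappa(Y)=\tilde Y$.

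One small correction: your displayed identity has a sign slip. With $f=S(-1)y\,\mathbf{e_1}$ one gets $(I-S_1R)f=(S(-1)y,-y,0,\dots)$, so $(0,y,0,\dots)-(I-S_1R)f=(-S(-1)y,2y,0,\dots)$, not $S(-1)y\,\mathbf{e_1}$. The fix is to take $f=-S(-1)y\,\mathbf{e_1}$; then $(I-S_1R)f=(-S(-1)y,y,0,\dots)$ and indeed $(0,y,0,\dots)-(I-S_1R)f=S(-1)y\,\mathbf{e_1}$, giving $R\,y\mathbf{e_1}+K=S(-1)y\,\mathbf{e_1}+K$ as you want. With that adjustment the argument goes through.
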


\begin{proof}
	Let $(\bar{S}(t))_{t\in\R}$ be defined on $\kappa(Y) \subset \tilde{Y}$ by
	$$\bar{S}(t)\kappa y := \kappa S(t) y, \quad y \in Y, t \in \R.$$
	By construction, $\tilde{S}(t)|_{\kappa(Y)} = \bar{S}(t)$ for $t \in \R_+$. We will show that this equality holds for $t <0$ as well. Let $x=\kappa y \in \kappa(Y)$. Then
	$$\bar{S}(-t)\tilde{S}(t)x = \bar{S}(-t)\tilde{S}(t)\kappa y = \bar{S}(-t)\kappa S(t)y = \kappa S(-t)S(t)y = \kappa y = x, \quad t \ge 0.$$ Likewise, $\tilde{S}(t)\bar{S}(-t)x = x$ for all $x \in \kappa(Y), t\ge 0$. Note that this only works because $S(t)$ is already defined on $Y$ for $t<0$. Thus
	$$\bar{S}(-t) = \tilde{S}(t)|_{\kappa(Y)}^{-1} = \tilde{S}(-t)|_{\kappa(Y)}, \quad t \ge 0.$$ In particular, as $\bar{S}(t)$ maps $\kappa(Y)$ into $\kappa(Y)$ for all $t \in \R$, $\kappa(Y)$ is a closed $\tilde{S}$ invariant subpsace. By minimality in the sense of Remark \ref{3r1}, $\kappa(Y) = \tilde{Y}$.
\end{proof}

We are now ready to prove the main insight gained by rethinking the Arens' extension in fresh categorical language. In what follows, we identify $EF(Y,S)$ with $(Y,S)$ via the previous lemma and suppress the map $\kappa$ where it appears implicitly.

\begin{theorem}\label{adjunction}
	$\langle E, F, \varphi\rangle$ is an adjunction from $\mathbf{C_0SG_{ex}}$ to $\mathbf{C_0G_{ex}}$ where for any $(X,T) \in \mathbf{C_0SG_{ex}}$ and $(Y,S)\in \mathbf{C_0G_{ex}}$, $\varphi$ is the bijection given by
	\begin{align*} \varphi : \hom(E(X,T),(Y,S)) & \cong \hom((X,T),F(Y,S))\\
	U & \longmapsto (FU)|_{X} = U|_X \\
	EV & \longmapsfrom V.
	\end{align*} Thus $E$ is the left-adjoint of $F$.
\end{theorem}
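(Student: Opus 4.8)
The plan is to verify the three requirements of Definition \ref{adjdef} in turn: that $\varphi$ is well-defined, that it is a bijection, and that the two naturality diagrams commute. For well-definedness of $\varphi$, given $U \in \hom(E(X,T),(Y,S))$ I would observe that, since $U$ intertwines $\tilde{T}$ and $S$ and $\rho \in \hom((X,T),F(E(X,T)))$ is a morphism (it intertwines $T$ with $\tilde{T}$ restricted to $\R_+$), the composite $U \circ \rho = U|_X$ intertwines $T$ and the semigroup part of $S$; hence $U|_X \in \hom((X,T),F(Y,S))$. For the reverse assignment $V \mapsto EV$: given $V \in \hom((X,T),F(Y,S))$, I first note that $V$ is a morphism in $\mathbf{C_0SG_{ex}}$ from $(X,T)$ to $(Y, (S(t))_{t\ge 0})$, so Proposition \ref{3p1} applies and produces $EV = \tilde{V} \in \hom(E(X,T),EF(Y,S))$; identifying $EF(Y,S)$ with $(Y,S)$ via Lemma \ref{EFI} (and suppressing $\kappa$), this lands in $\hom(E(X,T),(Y,S))$ as required.

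Next I would show the two assignments are mutually inverse. Starting from $V \in \hom((X,T),F(Y,S))$, apply Proposition \ref{3p1} to get $\tilde{V}$, which by that proposition satisfies $\tilde{V}\rho = \sigma V$ where here $\sigma = \kappa$ is the embedding of $Y$ into $EF(Y,S)$; under the identification $\kappa(Y)=\tilde{Y}$ of Lemma \ref{EFI}, this reads $\tilde{V}|_X = V$, so $\varphi(EV) = V$. Conversely, starting from $U \in \hom(E(X,T),(Y,S))$, I need $E(U|_X) = U$; this follows from the \emph{uniqueness} clause of Proposition \ref{3p1}, since $U$ itself is an operator intertwining $\tilde{T}$ and $S$ with $U\rho = \kappa (U|_X)$ (again using the identification), and $E(U|_X)$ is by definition the unique such operator. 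Thus $\varphi$ is a bijection.

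Finally, for the naturality squares: given $f : (X',T') \to (X,T)$ in $\mathbf{C_0SG_{ex}}$, I must check $\varphi \circ (Ef)^* = f^* \circ \varphi$, i.e.\ that for $U \in \hom(E(X,T),(Y,S))$ one has $(U \circ Ef)|_{X'} = (U|_X)\circ f$ as morphisms $(X',T') \to F(Y,S)$. This reduces to the identity $Ef \circ \rho_{X'} = \rho_X \circ f$ between the embeddings, which is precisely the commutativity of the first diagram of Proposition \ref{3p1}; restricting $U \circ Ef$ to $X' = \rho_{X'}(X')$ and using this identity gives the claim. The second square, for $g : (Y,S) \to (Y',S')$ in $\mathbf{C_0G_{ex}}$, requires $\varphi \circ g_* = (Fg)_* \circ \varphi$, i.e.\ $(g\circ U)|_X = (Fg)\circ(U|_X)$; but restriction to a subspace commutes with post-composition, so this is immediate. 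I expect the main obstacle to be purely bookkeeping: keeping the identification $EF(Y,S)\cong(Y,S)$ of Lemma \ref{EFI} and the suppressed isometries $\kappa,\rho,\sigma$ consistent throughout, so that the uniqueness argument of Proposition \ref{3p1} can legitimately be invoked to conclude $E(U|_X)=U$. Once that identification is handled carefully, every remaining verification is a routine diagram chase built on the commuting squares already established in Proposition \ref{3p1} and the minimality observation of Remark \ref{3r1}.
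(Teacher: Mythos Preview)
Your proposal is correct and essentially parallels the paper's argument, with one presentational difference worth noting. The paper verifies the naturality squares via $\psi=\varphi^{-1}$ and invokes the functoriality of $E$ (Theorem~\ref{main}) directly: for $W\in\hom((X,T),F(Y,S))$ one computes $\psi(U^*(W))=E(WU)=E(W)E(U)=(EU)^*(\psi(W))$ and $\psi((FV)_*(W))=E((FV)W)=(EFV)(EW)=V_*(\psi(W))$, the last step using $EFV=V$ from Lemma~\ref{EFI}. You instead verify naturality on the $\varphi$ side by going back to the commuting square $Ef\circ\rho_{X'}=\rho_X\circ f$ of Proposition~\ref{3p1}. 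Both routes are valid; the paper's is slightly slicker because it packages the required identity as ``$E$ preserves composition'' rather than unpacking Proposition~\ref{3p1} again, while yours has the merit of being more self-contained and of making the bijectivity of $\varphi$ explicit (the paper leaves that verification to the reader, relying on the two-sided description of $\varphi$ in the statement together with the uniqueness clause of Proposition~\ref{3p1}).
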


\begin{proof}
	Let $(X,T),(X',T') \in \mathbf{C_0SG_{ex}}$ and $(Y,S),(Y',S') \in \mathbf{C_0G_{ex}}$ and let $U \in \hom((X',T'),(X,T))$ and $V \in \hom((Y,S),(Y',S'))$. Denote $\varphi^{-1}$ by $\psi$. Then for any $W \in \hom((X,T),F(Y,S))$ we have
	$$\psi(U^*(W)) = E(WU) = E(W)E(U) = (EU)^*(EW) = (EU)^*(\psi(W))$$
	and
	$$\psi((FV)_*(W)) = E((FV)W) = (EFV)(EW) = V(EW) = V_*(\psi(W))$$ where $EFV=V$ by Lemma \ref{EFI}. Thus the following diagrams commute:
	\begin{figure}[H]   
		{
			\begin{tikzpicture}[every node/.style={midway}]
			\matrix[column sep={4em,between origins}, row sep={2em}] at (0,0) {
				\node(H1) {$(X',T')$}  ; \\
				\node(H3) {$(X,T)$}; \\
			};
			\draw[->] (H1) -- (H3) node[anchor=east] {$U$};
			\end{tikzpicture}
		}
		{
			\begin{tikzpicture}[every node/.style={midway}]
			\matrix[column sep={13em,between origins}, row sep={2em}] at (0,0) {
				\node(H1) {$\hom((X,T),F(Y,S))$}  ; & \node(H2) {$\hom(E(X,T),(Y,S))$}; \\
				\node(H3) {$\hom((X',T'),F(Y,S))$}; & \node (H4) {$\hom(E(X',T'),(Y,S)),$};\\
			};
			\draw[->] (H1) -- (H2) node[anchor=south]  {$\psi$};
			\draw[->] (H3) -- (H4) node[anchor=south] {$\psi$};
			\draw[->] (H1) -- (H3) node[anchor=east] {$U^*$};
			\draw[->] (H2) -- (H4) node[anchor=west] {$(EU)^*$};
			\end{tikzpicture}
			
			\begin{tikzpicture}[every node/.style={midway}]
			\matrix[column sep={4em,between origins}, row sep={2em}] at (0,0) {
				\node(H1) {$(Y,S)$}  ; \\
				\node(H3) {$(Y',S')$}; \\
			};
			\draw[->] (H1) -- (H3) node[anchor=east] {$V$};
			\end{tikzpicture}
		}
		{
			\begin{tikzpicture}[every node/.style={midway}]
			\matrix[column sep={13em,between origins}, row sep={2em}] at (0,0) {
				\node(H1) {$\hom((X,T),F(Y,S))$}  ; & \node(H2) {$\hom(E(X,T),(Y,S))$}; \\
				\node(H3) {$\hom((X,T),F(Y',S'))$}; & \node (H4) {$\hom(E(X,T),(Y',S')).$};\\
			};
			\draw[->] (H1) -- (H2) node[anchor=south]  {$\psi$};
			\draw[->] (H3) -- (H4) node[anchor=south] {$\psi$};
			\draw[->] (H1) -- (H3) node[anchor=east] {$(FV)_*$};
			\draw[->] (H2) -- (H4) node[anchor=west] {$V_*$};
			\end{tikzpicture}
		}
	\end{figure}
	In particular, $\varphi = \psi^{-1}$ satisfies the requirements of Definition \ref{adjdef}.
\end{proof}

As mentioned in Section \ref{sec:2}, adjunctions immediately imply the existence of universal morphisms via Yoneda's Lemma. However, in our case, we can show this directly.

\begin{theorem}\label{3t2}
	Consider the forgetful functor $F:\mathbf{C_0G_{ex}}\to \mathbf{C_0SG_{ex}}$ and $(X,T) \in \mathbf{C_0SG_{ex}}$. Then $\langle E(X,T),\rho\rangle$ is a universal morphism from $(X,T)$ to $F$, where $\rho$ is the isometric intertwining operator embedding $X$ into its extension.
\end{theorem}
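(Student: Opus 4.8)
The plan is to verify the universal property directly from the definition, using the adjunction $\langle E, F, \varphi\rangle$ of Theorem~\ref{adjunction} (or, if one prefers, re-deriving the needed fact by hand from Proposition~\ref{3p1} and Lemma~\ref{EFI}). First I would fix an arbitrary object $(Y,S) \in \mathbf{C_0G_{ex}}$ and a morphism $f \in \hom((X,T), F(Y,S))$, i.e.\ a bounded operator $f : X \to Y$ intertwining $T(t)$ with $S(t)$ for all $t \ge 0$. I must produce a unique $f' \in \hom(E(X,T),(Y,S))$, i.e.\ a morphism $f' : (\tilde X, \tilde T) \to (Y,S)$ of $C_0$-groups, such that $f = (Ff') \circ \rho = f'|_X \circ \rho$, remembering that $\rho : X \to \tilde X$ here is viewed as a morphism into $F E(X,T)$ via the remark preceding the theorem.

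For existence, the natural candidate is $f' := \varphi^{-1}(f) = \psi(f)$, where $\psi$ is the inverse bijection of $\varphi$ appearing in Theorem~\ref{adjunction}. Concretely, $\psi(f) = E(f)$ after identifying $EF(Y,S)$ with $(Y,S)$ via $\kappa$ of Lemma~\ref{EFI}; unwinding this, $f'(g + J) = M_f g + K'$ followed by $\kappa^{-1}$, where $M_f$ is coordinatewise application of $f$ and $K'$ is the subspace of $\ell_1(Y)$ built from $F(Y,S)$ as in the proof of Proposition~\ref{3p1}. One then checks $f'$ is a $C_0$-group morphism (this is exactly the content of Proposition~\ref{3p1} applied to the morphism $f$, noting that $(\tilde Y, \tilde S) = EF(Y,S) \cong (Y,S)$) and that $f'\circ\rho = f$, which is precisely the computation $\tilde U \rho(x) = \sigma(Ux)$ from Proposition~\ref{3p1} composed with the identification $\sigma = \kappa$, $\tilde Y \cong Y$.

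For uniqueness, suppose $f'_1, f'_2 \in \hom(E(X,T),(Y,S))$ both satisfy $f'_i \circ \rho = f$. Then $(f'_1 - f'_2)\circ \rho = 0$, and since each $f'_i$ intertwines the \emph{group} $\tilde T$ with $S$, for every $n \ge 0$ we get $(f'_1 - f'_2)\tilde T(-n)\rho(x) = S(-n)(f'_1 - f'_2)\rho(x) = 0$. By the minimality built into the construction (equation~(\ref{3e1}) and Remark~\ref{3r1}), the span of $\bigcup_{n\ge 0}\tilde T(-n)\rho(X)$ is dense in $\tilde X$, so $f'_1 = f'_2$. This is the same density argument already used to establish uniqueness of $\tilde U$ in Proposition~\ref{3p1}, so no new work is required.

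I expect no serious obstacle here: the theorem is essentially a restatement of the adjunction together with the minimality of the extension, and every ingredient has already been proved. The one point demanding care is bookkeeping around the identifications: one must consistently treat $\rho$ as a morphism \emph{in} $\mathbf{C_0SG_{ex}}$ into $F E(X,T)$, and invoke Lemma~\ref{EFI} to replace $EF(Y,S)$ by $(Y,S)$ so that $\varphi^{-1}(f)$ genuinely lands in $\hom(E(X,T),(Y,S))$ rather than in $\hom(E(X,T), EF(Y,S))$. Once these identifications are spelled out, the factorization $f = (Ff')\circ\rho$ and its uniqueness fall out immediately, and one may alternatively remark that the result is a formal consequence of Theorem~\ref{adjunction} via the standard correspondence between adjunctions and universal morphisms (\cite[Ch.~IV]{MacL}).
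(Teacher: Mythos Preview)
Your proposal is correct and follows essentially the same approach as the paper: both construct the factoring morphism as $Ef$ via Proposition~\ref{3p1}, identify $EF(Y,S)$ with $(Y,S)$ through Lemma~\ref{EFI}, and obtain uniqueness from the density argument built into Proposition~\ref{3p1}. The only cosmetic difference is that you frame the construction through the adjunction bijection $\varphi^{-1}$ and spell out the uniqueness argument separately, whereas the paper invokes Proposition~\ref{3p1} once for both existence and uniqueness and then rearranges the resulting commutative diagram.
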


\begin{proof}
	Let $(Y,S) \in \mathbf{C_0G_{ex}}$ and $U \in \hom((X,T),F(Y,S))$. Denote $E(X,T)$ by $(\tilde{X},\tilde{T})$. Let $\kappa, I,$ and $J$ be the morphisms given by the isometric isomorphism in Lemma \ref{EFI}, and the identity operators on $\tilde{X}$ and $\kappa(Y)$ respectively. Then by Proposition \ref{3p1}, there exists a unique $\tilde{U} = EU$ such that the following diagram commutes:
	
	\begin{figure}[H]   
		{
			\begin{tikzpicture}[every node/.style={midway}]
			\matrix[column sep={11em,between origins}, row sep={4em}] at (0,0) {
				\node(H1) {$(X,T)$}  ; & \node(H2) {$(\tilde{X},\tilde{T})$}; & \node(H3) {$F(\tilde{X},\tilde{T})$}; \\
				\node(H4) {$F(Y,S)$} ; & \node(H5) {$EF(Y,S)$}; & \node(H6) {$FEF(Y,S).$} ;\\
			};
			\draw[->] (H1) -- (H2) node[anchor=south]  {$\rho$};
			\draw[->] (H4) -- (H5) node[anchor=south] {$\kappa$};
			\draw[->] (H1) -- (H4) node[anchor=east] {$U$};
			\draw[->] (H2) -- (H5) node[anchor=east] {$\tilde{U}$};
			\draw[->] (H2) -- (H3) node[anchor=south] {$I$};
			\draw[->] (H5) -- (H6) node[anchor=south] {$J$};
			\draw[->] (H3) -- (H6) node[anchor=east] {$F\tilde{U}$};
			\end{tikzpicture}
		}
	\end{figure}
	In particular, $\tilde{U}$ is the unique morphism such that the following rearranged diagram commutes after identifying $EF(Y,S)$ with $(Y,S)$ by Lemma \ref{EFI} and writing $I\circ \rho = \rho$ since they are equal as intertwining operators from $X$ to $\tilde{X}$:
	\begin{figure}[H]   
		{
			\begin{tikzpicture}[every node/.style={midway}]
			\matrix[column sep={7em,between origins}, row sep={4em}] at (0,0) {
				\node(H1) {$(X,T)$}  ; & \node(H2) {$FE(X,T)$}; \\
				& \node (H4) {$F(Y,S)$};\\
			};
			\draw[->] (H1) -- (H2) node[anchor=south]  {$\rho$};
			\draw[->] (H1) -- (H4) node[anchor=north east] {$U$};
			\draw[dashed,->] (H2) -- (H4) node[anchor=west] {$F\tilde{U}$};
			\end{tikzpicture}
		}
		{
			\begin{tikzpicture}[every node/.style={midway}]
			\matrix[column sep={7em,between origins}, row sep={4em}] at (0,0) {
				\node(H1) {$E(X,T)$}  ; \\
				\node(H3) {$(Y,S)$.};\\
			};
			\draw[dashed,->] (H1) -- (H3) node[anchor=east] {$\tilde{U}$};
			\end{tikzpicture}
		}
	\end{figure}
\end{proof}

The extension $E(X,T) = (\tilde{X},\tilde{T})$ via $\rho:X\to\tilde{X}$ is also universal in the stronger sense of \cite[\S 3]{BY}, that is, for all minimal extensions $(Y,S) \in \mathbf{C_0G_{ex}}$ with intertwining isometry $\pi$,
\begin{equation*}\label{3e4}\|S(t)\pi(x)\| \leq \|\tilde{T}(t)\rho(x)\|, \quad x\in X, t\in \R.\end{equation*} As in \cite[\S 3]{BY}, this is equivalent to the existence of a (unique) linear contraction $\lambda : \tilde{X} \to Y$ such that $\lambda \circ \rho = \pi$ and $S(t) \lambda = \lambda\tilde{T}(t), \; t\in\R$. To see that we have such a $\lambda$, suppose that $(Y,S)$ is another minimal extension via $\pi:X\to Y$ of $(X,T)$. Then $\pi$ can be considered a morphism in $\hom((X,T),F(Y,S))$. By Theorem \ref{3t2},	we can uniquely factor $\pi$ into $\pi = \tilde{\pi}\circ \rho$ for some $\tilde{\pi} : \tilde{X} \to Y$ since $F\tilde{\pi}$ is the same as $\tilde{\pi}$ as an operator. Pictorially, all we are doing is replacing a general morphism $U$ with the intertwining isometry $\pi$ in the first part of the first diagram of the previous proof as follows:
\begin{figure}[H]   
	{
		\begin{tikzpicture}[every node/.style={midway}]
		\matrix[column sep={11em,between origins}, row sep={4em}] at (0,0) {
			\node(H1) {$(X,T)$}  ; & \node(H2) {$(\tilde{X},\tilde{T})$}; \\
			\node(H4) {$F(Y,S)$} ; & \node(H5) {$E(F(Y,S)) = (Y,S).$}; \\
		};
		\draw[->] (H1) -- (H2) node[anchor=south]  {$\rho$};
		\draw[->] (H4) -- (H5) node[anchor=south] {$\kappa$};
		\draw[->] (H1) -- (H4) node[anchor=east] {$\pi$};
		\draw[->] (H2) -- (H5) node[anchor=east] {$\tilde{\pi}$};
		\end{tikzpicture}
	}
\end{figure}
As $\tilde{\pi}$ is here considered a morphism in $\hom((\tilde{X},\tilde{T}),(Y,S))$, this precisely means that $S(t)\tilde{\pi} = \tilde{\pi}T(t), \; t \in \R$. Finally, as $\tilde{\pi}$ is constructed via Proposition \ref{3p1}, $\|\tilde{\pi}\| = \|\pi\| = 1$. Thus $\tilde{\pi}$ is the linear contraction $\lambda$ we were after.

As stated in Remark \ref{3r1}, this also shows that the Batty-Yeates extension is identical to the extension by $E$ in the sense of Remark \ref{2ra} by \cite[Theorem 3.3]{BY}. The Batty-Yeates extension is constructed in a less artificial way, as well as for a more general setting. The practical advantage of $E$ is that the construction is much more explicit and provides the norm of the extended space in a form that is much easier to use than the norm given in \cite{BY}. The theoretical advantage of $E$ is that the explicit form of the extension allows us to lift intertwining operators as in Proposition \ref{3p1}.

\subsection{Comments on additional properties of the space or semigroup}

Though not the main point of this article, which is to provide a higher level view of the extension construction, some words are perhaps in order regarding additional properties of either the relevant spaces or $C_0$-semigroups.

First, the only assumption underlying everything done so far is that $X$ is a Banach space. Naturally, one is moved to ask about the case where $X$ is Hilbert. In answer to that, the construction in Subsection \ref{const} clearly destroys the Hilbert space structure, given that the sequence space $\ell_1(X)$ is used. What if an analogous $\ell_2(X)$ space is used instead? This has been previously addressed in the setting of a single bounded linear operator (and hence also for discrete operator semigroups) by \cite[Theorem 6.3]{BG} where the result \cite[Corollary 4.8]{BM} is restated. However, in that case, the newly created inverses are no longer bounded by $1$ as in Theorem \ref{3t1}. This led Batty and Geyer to subsequently pose \cite[Open Question 6.4]{BG}, for which we are currently unaware of any resolution.

It is worth noting here that Read also produced a different extension that removed the residual spectrum for single linear bounded operators on Banach spaces \cite{Read1,Read3}. In the construction of an analogous extension for Hilbert spaces \cite{Read2}, the norms of the elements in the commutant of the operator being extended were only `almost' preserved. It remains an open question whether there is a spectrum-reducing extension on Hilbert spaces that completely preserves the norms of elements in the commutant. All this leads the author to believe that a structure preserving extension theory for Hilbert spaces as comprehensive and nice as what we were able to do for general Banach spaces may be difficult to find.

Second, for $(\tilde{X},\tilde{T}) = E(X,T)$ where $(X,T) \in \mathbf{C_0SG_{ex}}$, we know that $\|\tilde{T}(-1)\| \le 1$. However, is this inequality in fact equality if $T$ is a semigroup of isometries? The answer is yes, though the author finds it hard to show this directly from the construction. Instead, we show that the Arens extension is equivalent to the extension Douglas \cite{D} already constructed for isometric semigroups as mentioned in the introduction. We do so as follows.

Let $T$ be a semigroup of isometries on a Banach space $X$ and let $(X_D,T_D)$ denote the Douglas extension to a $C_0$-group. Then $(X_D,T_D)$ is a minimal extension that is also universal in the stronger sense of \cite[\S 3]{BY} (see \cite[Example 3.1]{BY}). Let $(\tilde{X},\tilde{T}) = E(X,T)$ and denote by $(X_{BY},T_{BY})$ the Batty--Yeates extension. By \cite[Theorem 3.3]{BY}, there exist isometric isomorphisms $i:X_{BY} \to \tilde{X}$ and $ j:X_{BY} \to X_D$ such that $i \circ T_{BY} = \tilde{T}  \circ i$ and $  j \circ T_{BY} = T_D \circ j$. So $(\tilde{X},\tilde{T})$ and $(X_D,T_D)$ can be identified together via the isometric isomorphism $j\circ i^{-1}$. In particular,
$$\|\tilde{T}(t)x\| = \|j\circ i^{-1} \circ \tilde{T}(t)x\| = \| T_D(t) \circ j \circ i^{-1} x\| = \|x\|, \quad  x \in \tilde{X}, t\in \R.$$
Note that Douglas' construction also preserves Hilbert space structure \cite[Theorem 1]{D}, which is interesting given our first point of discussion in this subsection about the difficulties that arise when working with Hilbert spaces.

Third, as seen in the mention of spectral mapping theorems at the end of Section \ref{sec:3}, questions regarding the spectrum of both generator and semigroup are generally of interest. Here, the Read extension mentioned above is more relevant, as it specifically deals with spectrum. Nonetheless, we mention briefly that for our extension $E(X,T) = (\tilde{X},\tilde{T})$, \cite[Proposition 2.2]{BG} applies, and furthermore the space $Y_0$ in \cite[Proposition 2.2]{BG} is the whole space $\tilde{X}$. Thus $\sigma(G^{-1}FE(X,T))\subset \sigma(G^{-1}(X,T))$, where $\sigma(-)$ is understood to denote the spectrum of the operator component of the objects in $\mathbf{C_0SGG}$.

There remain many interesting questions regarding the Arens extension. For example, does it preserve lattice structure? Positivity of the semigroup? Though these questions fall outside of our present scope, we hope the reader will find it stimulating to think about these things.

\section{Excursus -- Isometric dilations of Hilbert space contractions}\label{sec:5}

This short section following our discussion on extensions is a slight digression where we raise the possibility of future approaches to the dilation of operator semigroups via a similar categorical framework. The idea to address this possibility was suggested by the reviewer.

It is well known (the classic text is \cite{BFKSZ}) that for any linear contraction $T$ on a Hilbert space $H$, there exist a Hilbert space $\tilde{H} \supset H$ and an isometry $\tilde{T}$ on $\tilde{H}$ such that
$$T^n x = P\tilde{T}^n x, \quad x \in H, n \in \Z_+,$$ where $P$ is the orthogonal projection from $\tilde{H}$ onto $H$.

We can actually choose the dilation to be a unitary operator and, furthermore, these dilations can also be found for strongly continuous semigroups. For the sake of this discussion, we will stick to isometric dilations of single contractions (and hence discrete contraction semigroups).

The enveloping space $\tilde{H}$ can be chosen to be minimal in the sense that the smallest invariant subspace for $\tilde{T}$ that contains $H$ is $\tilde{H}$. In general, $\tilde{T}$ is called an isometric \textit{dilation} of $T$ and when $\tilde{H}$ and $\tilde{T}$ are chosen to be minimal, then the dilation is unique up to the existence of an intertwining unitary operator.

Similar to what was done in Section \ref{sec:3}, we can define $\mathbf{CH}$ to be the category of contractive operators on Hilbert spaces consisting of objects $(H,T)$ where $H$ is a Hilbert space and $T$ is a contraction on $H$ and morphisms given by intertwining operators. Let $\mathbf{IH}$ be the category of isometric operators on Hilbert spaces defined in the same way.

The pertinent issue is now whether the object map $D: \ob{(\mathbf{CH})} \to \ob{(\mathbf{IH})}$ that takes a contraction $T$ on a Hilbert space $H$ to its minimal isometric dilation $\tilde{T}$ on $\tilde{H}$ can be further defined so as to be a functor. The most natural way to go about this is via the question of dilating intertwining operators. This question is so involved, however, that there was a series of eight papers written on this subject appropriately titled `On intertwining dilations I--VIII' by Ceau\c{s}escu \textit{et al}. In that extended discourse, the dilations of interest were known as \textit{exact intertwining dilations} or EID, which, for a given intertwining operator $U$ from $(H,T)$ to $(K,S)$, are defined to be the intertwining operators $V$ from $(\tilde{H},\tilde{T})$ to $(\tilde{K},\tilde{S})$ such that $\|U\| = \|V\|$ and the following diagrams commute:

\begin{figure}[H]   
	{
		\begin{tikzpicture}[every node/.style={midway}]
		\matrix[column sep={5em,between origins}, row sep={3em}] at (0,0) {
			\node(H1) {$\tilde{H}$}  ; & \node(H2) {$\tilde{K}$}; \\
			\node(H3) {$H$}; & \node (H4) {$K$};\\
		};
		\draw[->] (H1) -- (H2) node[anchor=south]  {$V$};
		\draw[->] (H3) -- (H4) node[anchor=south] {$U$};
		\draw[->] (H1) -- (H3) node[anchor=east] {$P_H$};
		\draw[->] (H2) -- (H4) node[anchor=west] {$P_K$};
		\end{tikzpicture}
	}
	{
		\begin{tikzpicture}[every node/.style={midway}]
		\matrix[column sep={5em,between origins}, row sep={3em}] at (0,0) {
			\node(H1) {$\tilde{H}$}  ; & \node(H2) {$\tilde{K}$}; \\
			\node(H3) {$\tilde{H}$}; & \node (H4) {$\tilde{K}$.};\\
		};
		\draw[->] (H1) -- (H2) node[anchor=south]  {$V$};
		\draw[->] (H3) -- (H4) node[anchor=south] {$V$};
		\draw[->] (H1) -- (H3) node[anchor=east] {$\tilde{T}$};
		\draw[->] (H2) -- (H4) node[anchor=west] {$\tilde{S}$};
		\end{tikzpicture}
	}
\end{figure}
\noindent Here $P_H$ and $P_K$ are the orthogonal projections from $\tilde{H}$ and $\tilde{K}$ onto $H$ and $K$ respectively.

Prior to this series of papers, it was already known (see \cite{BFKSZ}) that there always exists at least one EID. However there can be more than one, so that the choice of which EID to assign as $DU$ can be ambiguous. Nonetheless, under certain regularity conditions, $U$ does indeed have a unique EID \cite{ACF}, allowing an unequivocal choice for $DU$. Thus, a proper theory similar to what we have produced for the extension of expansive semigroups may be possible, but substantial work befitting a separate article needs to be done.

All this is to say that the theory of dilations of operators and operator semigroups can benefit from more systematic category theoretic approaches. Indeed category theory was already used as a tool to understand Hilbert spaces and dilations in Arveson's survey \cite{Arv}. We note also that category theory really comes into its own when used to unify theories from different areas of mathematics. Since dilation theory is precisely a theory with many counterparts across mathematics -- the Sz.-Nagy theorem for contraction operators on Hilbert spaces, the Akcoglu-Sucheston theorem for positive contractions on reflexive $L^p$-spaces, the Naimark theorem for positive operator-valued measures, and the Stinespring theorem for completely positive maps on $C^*$-algebras -- we suggest that dilation theory may well be fertile ground for future categorical explorations. We note that an abstract structure theoretic approach to dilations of operators has already been developed in the aptly named \cite{FG}.

\section{UB-Coproducts}\label{sec:6}

We now turn our attention to coproducts in order to revisit the work of \cite{LM} through our categorical lens. For the category of vector spaces over a field $K$, the coproduct is precisely the direct sum of elements in the product with finitely many nonzero components. If we applied the same approach to Banach spaces, taking the indexing set $I = \N$, the $\ell_1$ norm would make sense. However, the direct sum of Banach spaces in this algebraic way is not complete. Hence the most ideal situation one could hope for is if the coproduct object of a sequence of Banach spaces $\chi = \{X_i : i \in \N \}$ is an $\ell_p$ space
$$\ell_p(\N,\chi) := \{f \in \ell(\N,\chi) : \|f\|_p  < \infty\}$$ where $\ell(\N,\chi)$ is the vector space of sequences $\{f_i\}_{i\in \N}$ with entries $f_i \in X_i$ and
$$\|f\|_p := \left(\sum_{i=1}^{\infty} \|f_i\|_{X_i}^p\right)^{1/p}$$ for $1\leq p <\infty$ together with component-wise operations.
Clearly $\ell_p(\N,\chi)$ is a Banach space and the sequences of finite support are dense in it. For our purposes, we will take $p=1$. Now given a sequence of Banach spaces $\{X_i\}_{i \in\N}$ and a sequence of operators $\{A_i\}_{i\in\N}$ on these spaces, we can define a diagonal operator on $\ell_1(\N,\chi)$.

\begin{definition}
	The diagonal of the sequence of linear operators $\{A_i : D(A_i) \subseteq X_i \to X_i\}_{i \in \N}$ is the operator $A: D(A) \to X = \ell_1(\N,\chi)$ given by
	\begin{align*}
	D(A) & := \{f \in X : f_i \in D(A_i),\{A_i f_i\}_{i\in \N}\in X\}, \\
	Af & := \{A_i f_i\}_{i\in \N}.
	\end{align*}
	We denote $A$ by $\diag_{i\in\N} A_i$ and say that a linear operator $T$ on $X$ is \textit{diagonal} if and only if $T = \diag_{i\in\N} T_i$ for some sequence $\{T_i\}_{i \in \N}$ of operators.
\end{definition}
We refer to \cite{LM} for a more comprehensive discussion on the so-called Banach direct sums and diagonal operators. The goal is now to characterise the coproducts of semigroups. Given a sequence $(X_i,T_i)$ of objects in $\mathbf{C_0SG}$, the naive construction of the coproduct would be to take $\amalg_{i \in \N}(X_i,T_i) = (\ell_1(\N,\chi),T)$ where $\chi = \{X_i\}_{i\in\N}$ and $T(t) =  \diag_{i\in \N} T_i(t).$ However there are two issues with doing this construction in general, the first being whether or not $T$ is a $C_0$-semigroup at all and the second being the unique factorisation of maps in the universality condition of coproduct diagrams.

First note that if $T$ is a $C_0$-semigroup, then there exist $M > 0$ and $\omega \in \R$ so that  $\|T(t)\| \leq Me^{\omega t}$ for all $t\geq 0$.  Hence, if $T$ is defined at each $t \in \R_+$ by $T(t) =\diag_{i\in\N} T_i(t)$, then taking the elements of $\ell_1(\N,\chi)$ that are $0$ everywhere except in the $i$-th component, we get that $\|T_i(t)\| \leq M e^{\omega t}$ for all $i\in\N$. Thus a uniform exponential growth bound is a necessary condition. For this reason, let us define subcategories of $\mathbf{C_0SG}$ by introducing parameters. Let $\mathbf{C_0SG}(M,\omega)$ denote the category of all $C_0$-semigroups with exponential growth bound $Me^{\omega t}$. Now \cite[Theorem 4.3]{LM} tells us that if there exists $M>0$ and $\omega \in \R$ such that $(X_i,T_i) \in \mathbf{C_0SG}(M,\omega), \; i\in \N,$ then $T(t) =  \diag_{i\in\N} T_i(t)$ is a $C_0$-semigroup on the space $X= \ell_1(\N,\chi)$. This deals with the first issue.

If this diagonal $T$ were to be the coproduct of $\{(X_i,T_i)\}_{i\in\N},$ then the natural injection morphisms would be $$j_i : X_i \to X, \ \ x \mapsto x \mathbf{e_i},$$ where $\mathbf{e_i}$ is the formal representation for the sequence with $1$ in the $i$-th component and $0$ elsewhere. 

Following this line of thought, if $k_i : X_i \to Y$ defined intertwining morphisms in $\hom((X_i,T_i),(Y,S))$ for some $(Y,S) \in \mathbf{C_0SG}$, the naturally induced factorising map $K:X \to Y$ for which $$k_i = K \circ j_i, \quad i \in \N,$$ holds would be given by $$K:f \mapsto \displaystyle\sum_{i \in \N} k_i f_i$$ for $f$ with finite support and extending by density provided this $K$ is bounded. Since \begin{align*}KT(t)f = K(\{T_i(t)f_i\}_{i \in \N}) & = \sum \left(k_i T_i(t)f_i\right)\\ & = \sum \left(S(t)k_i f_i\right) =S(t)\left(\sum k_i f_i \right) = S(t)Kf\end{align*} for all finitely supported $f$, such a $K$ would qualify as a morphism between $(X,T)$ and $(Y,S)$. In order to get this boundedness for $K$ and hence resolve the second issue mentioned earlier, we must restrict to satisfying the universal property for uniformly bounded $k_i$ by adjusting Definition \ref{2d1}.

\begin{definition}\label{defub}
	Given a collection of objects $\{(X_i,T_i)\}_{i \in \N}$ in $\mathbf{C_0SG}$, a \textit{UB-coproduct} is an object $\amalg_{i}^{UB} (X_i,T_i) \in \mathbf{C_0SG}$ called the \textit{UB-coproduct object} together with a collection of intertwining operators (morphisms) $j_i : (X_i,T_i) \to \amalg_{i}^{UB} (X_i,T_i)$ called \textit{UB-coproduct injections} such that for any collection of uniformly bounded intertwining operators (morphisms) $k_i : (X_i,T_i) \to (Y,S)$ for some fixed $(Y,S)\in\mathbf{C_0SG}$, there is a unique morphism $h: \amalg_{i}^{UB} (X_i,T_i) \to (Y,S)$ with $ k_i = h \circ j_i$ for all $i \in \N$.
\end{definition}

We can then restate \cite[Theorem 4.3]{LM} in the following way with an extra clause of uniqueness.

\begin{theorem}\label{ub}
	Let $\{(X_i,T_i)\}_{i\in \N}$ be a collection in $\mathbf{C_0SG}(M,\omega)$. Then the UB-coproduct exists. The $UB$-coproduct object is given by $\amalg_{i\in \N}^{UB}(X_i,T_i) = (\ell_1(\N,\chi),T)$ where $\chi = \{X_i\}_{i\in\N}$ and $T(t) = \diag_{i\in\N} T_i(t)$, the $UB$-coproduct injections $j_i : (X_i,T_i) \to (\ell_1(\N,\chi),T)$ are given by componentwise embeddings, and the unique factorising map is given by the construction for $K$ in the preceding discussion.
\end{theorem}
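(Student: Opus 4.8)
The plan is to verify the three claims of Definition \ref{defub} in turn for the candidate object $(\ell_1(\N,\chi),T)$ with $T(t)=\diag_{i\in\N}T_i(t)$: first that it is genuinely an object of $\mathbf{C_0SG}$, then that the componentwise embeddings $j_i$ are intertwining morphisms, and finally the existence and uniqueness of the factorising morphism $h$ for any uniformly bounded family $k_i$. Most of the first two points have already been dispatched in the discussion preceding the statement: that $T$ is a $C_0$-semigroup on $\ell_1(\N,\chi)$ is exactly \cite[Theorem 4.3]{LM}, and that each $j_i$ intertwines is the trivial computation $T(t)j_ix = T(t)(x\mathbf e_i) = (T_i(t)x)\mathbf e_i = j_iT_i(t)x$; it should be noted that $j_i$ is bounded (in fact isometric onto its image) so it is indeed a morphism.

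For the universal property, I would take a uniformly bounded family $k_i\in\hom((X_i,T_i),(Y,S))$, say $\|k_i\|\le C$ for all $i$, and define $K$ on finitely supported $f=\{f_i\}$ by $Kf=\sum_{i\in\N}k_if_i$. Boundedness is the key estimate: $\|Kf\|_Y\le\sum_i\|k_if_i\|_Y\le C\sum_i\|f_i\|_{X_i}=C\|f\|_1$, so $K$ extends uniquely to a bounded operator $\ell_1(\N,\chi)\to Y$ of norm at most $C$, using density of the finitely supported sequences. The relation $k_i=K\circ j_i$ is immediate on the generating elements, and the intertwining identity $KT(t)=S(t)K$ holds on finitely supported $f$ by the displayed computation in the preceding discussion; since both sides are bounded operators agreeing on a dense set they agree everywhere, so $K$ is a morphism $h:=K$ in $\mathbf{C_0SG}$ with the required factorisation.

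The remaining point, which is the genuinely new content beyond \cite{LM}, is \emph{uniqueness} of $h$. Here I would argue that any morphism $h'$ with $h'\circ j_i=k_i$ for all $i$ must agree with $K$ on every finitely supported sequence, since such a sequence is a finite sum $f=\sum_{i}f_i\mathbf e_i=\sum_i j_if_i$ and linearity forces $h'f=\sum_i h'j_if_i=\sum_i k_if_i=Kf$. Density of the finitely supported sequences in $\ell_1(\N,\chi)$ together with continuity of both $h'$ and $K$ then gives $h'=K$ on all of $\ell_1(\N,\chi)$. I do not anticipate a real obstacle here; the only mild subtlety to flag is that one must use the $\ell_1$ (rather than $\ell_p$ for $p>1$) structure so that the triangle-inequality estimate $\|Kf\|\le C\|f\|_1$ goes through with the \emph{uniform} bound $C$ on the $\|k_i\|$ — this is precisely why the definition restricts to uniformly bounded families and why $p=1$ was fixed earlier, and it is worth a sentence pointing out that without uniform boundedness the induced map $K$ need not be bounded, which is exactly the second issue raised before the statement.
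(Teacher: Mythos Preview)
Your proposal is correct and follows essentially the same approach as the paper: the construction of $K$, its boundedness, and the intertwining relation are all drawn from the discussion preceding the statement (as the paper does by simply invoking \cite[Theorem~4.3]{LM} and that discussion), and your uniqueness argument via agreement on finitely supported sequences and density is exactly the paper's argument, phrased directly rather than through the difference $K-g$.
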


\begin{proof}
	After applying \cite[Theorem 4.3]{LM}, it remains to show that given an object $(Y,S) \in \mathbf{C_0SG}$, the morphism $K: (\ell_1(\N,\chi),T) \to (Y,S)$ defined via the intertwining operator $K$ in the preceding discussion is unique. Suppose there exists another $g: (\ell_1(\N,\chi),T) \to (Y,S)$ such that $k_i = g \circ j_i$ for all $i\in \N$. Then $(K-g)\circ j_i = 0$ for all $i \in \N$, and in particular $(K-g)f = 0$ for all $f$ with finite support and by density, for all $f \in \ell_1(\N,\chi).$
\end{proof}

\providecommand{\bysame}{\leavevmode\hbox to3em{\hrulefill}\thinspace}
\providecommand{\MR}{\relax\ifhmode\unskip\space\fi MR }
\providecommand{\MRhref}[2]{%
	\href{http://www.ams.org/mathscinet-getitem?mr=#1}{#2}
}
\providecommand{\href}[2]{#2}

\end{document}